\newenvironment{packedItem}{
\begin{itemize}
  \setlength{\itemsep}{1pt}
  \setlength{\parskip}{0pt}
  \setlength{\parsep}{0pt}
}{\end{itemize}}
\let\oldmarginpar\marginpar
\renewcommand\marginpar[1]{\-\oldmarginpar[\raggedleft\footnotesize #1]%
{\raggedright\footnotesize #1}}
\newtheorem{theorem}{Theorem}
\newtheorem{obs}[theorem]{Observation}
\newtheorem{lemma}[theorem]{Lemma}
\newtheorem{conjecture}[theorem]{Conjecture}
\numberwithin{equation}{section}
\newcommand{\pr}{\mathfrak{pr}}
\newcommand{\vanish}[1]{}
\begin{document}

\title{Minimum Coprime Labelings of Generalized Petersen and Prism Graphs}

\author{
John Asplund\\
{\small Department of Technology and Mathematics,
Dalton State College} \\
{\small Dalton, GA 30720, USA} \\
{\small jasplund@daltonstate.edu}\\
\\
N. Bradley Fox\\
{\small Department of Mathematics and Statistics, Austin Peay State University} \\
{\small Clarksville, TN 37044} \\
{\small foxb@apsu.edu}
 }

\date{}
\maketitle

\begin{abstract}
A coprime labeling of a graph of order $n$ is an assignment of distinct positive integer labels in which adjacent vertices have relatively prime labels.  Restricting labels to only the set $1$ to $n$ results in a prime labeling.  In this paper, we consider families of graphs in which a prime labeling cannot exist with the goal being to minimize the largest value of the labeling set, resulting in a minimum coprime labeling.  In particular, prism graphs, generalized Petersen graphs with $k=2$, and stacked prism graphs are investigated for minimum coprime labelings.
\end{abstract}

\section{Introduction}







Let $G$ be a simple graph of order $n$ with vertex set $V$.  We denote two adjacent vertices $v,w$ as $v\sim w$. A \textit{coprime labeling} of $G$ is a labeling of $V$ using distinct labels from the set $\{1,\ldots,m\}$ for some integer $m\geq n$ in which adjacent vertices are labeled by relatively prime integers.  If the integers $1,\ldots, n$ are used as the labeling set, the labeling is called a \textit{prime labeling}, and $G$ is a \textit{prime graph} or is simply referred to as \textit{prime}.  For graphs for which no prime labeling exists, our goal is to minimize the value of $m$, the largest label in the coprime labeling.  This smallest possible value $m$ for a coprime labeling of $G$, denoted by $\pr(G)$, is the \textit{minimum coprime number} of $G$, and a coprime labeling with $\pr(G)$ as the largest label is a \textit{minimum coprime labeling} of $G$.  A prime graph would have a minimum coprime number of $\pr(G)=n$.

Prime labelings of graphs were developed by Roger Entringer and first introduced by Tout, Dabboucy, and Howalla~\cite{TDH}.  Numerous classes of graphs over the past forty years have been shown to be prime, as well as many classes for which a prime labeling has been shown to not exist.  A summary of these results can be seen in Gallian's dynamic survey of graph labelings~\cite{Gallian}.  Most of our upcoming results center around the concept of minimum coprime labelings, which were first studied by Berliner et al.~\cite{Berliner} with their investigation of complete bipartite graphs of the form $K_{n,n}$.  
Asplund and Fox~\cite{AF} continued this line of research by determining the minimum coprime number for classes of graphs such as complete graphs, wheels, the union of two odd cycles, the union of a complete graph with a path or a star, powers of paths and cycles, and the join of paths and cycles.  Recently, Lee~\cite{Lee} made further progress on the minimum coprime number of the join of paths and complete bipartite graphs, in addition to investigating minimum coprime numbers of random subgraphs.

The focus of this paper is to determine the minimum coprime number of prism graphs, which are equivalent to the Cartesian product of a cycle of length $n$ and a path with $2$ vertices, denoted as $C_n\square P_2$. Additionally, a prism graph is equivalent to the generalized Petersen graphs when $k=1$.  In the next section, we include preliminary material regarding the classes of graphs we will investigate and previous research on prime labelings of these graphs.  In Section 3, we construct minimum coprime labelings of prism graphs $GP(n,1)$  for several specific cases of odd $n$ as well as present a conjecture for all sizes of odd prisms.  Section 4 includes results on the minimum coprime number of the generalized Petersen graph $GP(n,2)$, a graph which is not prime for any value $n$.  Section 5 consists of results on minimum coprime number of stacked prism graphs, and finally we investigate a variation of a generalized Petersen graph in Section 6.

\section{Preliminary Material}
An important feature of a graph $G$ that aides in determining whether a prime labeling may exist or if a minimum coprime labeling should instead be investigated is its \textit{independence number}, denoted by~$\alpha(G)$.  Since even number labels must be assigned to independent vertices, the following criteria, first introduced in~\cite{FH}, eliminates the possibility of a prime labeling on many classes of graphs.

\begin{lemma}{\normalfont \cite{FH}}\label{ind}
If $G$ is prime, then the independence number of $G$ must satisfy $\alpha(G)\geq \left\lfloor \frac{|V(G)|}{2}\right\rfloor$.
\end{lemma}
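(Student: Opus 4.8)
The plan is to exploit the elementary fact that any two even integers share the common factor $2$ and are therefore never relatively prime. Suppose $G$ is prime, and fix a prime labeling of $G$; by definition this assigns distinct labels from $\{1,\ldots,n\}$, where $n=|V(G)|$, to the vertices of $G$ so that adjacent vertices receive coprime labels.

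Next I would single out the set $S\subseteq V(G)$ consisting of those vertices whose assigned label is even. The key step is to argue that $S$ is independent: if two vertices $v,w\in S$ were adjacent, their labels would both be even, so their greatest common divisor would be at least $2$, contradicting the requirement that adjacent labels be coprime. Hence no two vertices of $S$ are adjacent.

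To finish, I would simply count the even labels. The number of even integers in $\{1,\ldots,n\}$ is exactly $\left\lfloor \frac{n}{2}\right\rfloor$, and because the labeling is injective, $S$ contains precisely one vertex for each such even label, so $|S|=\left\lfloor \frac{n}{2}\right\rfloor$. Since $\alpha(G)$ is by definition the maximum cardinality of an independent set, we conclude $\alpha(G)\geq |S|=\left\lfloor \frac{|V(G)|}{2}\right\rfloor$, as claimed.

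I do not anticipate any genuine obstacle here: once the observation that even-labeled vertices are pairwise non-adjacent is in hand, the result follows immediately. The only point requiring a small amount of care is the floor count of even integers in $\{1,\ldots,n\}$, which is routine. The real content of the lemma is conceptual rather than computational, namely recognizing that the scarcity of labels coprime to one another (specifically, that the even labels must all land on an independent set) imposes a structural lower bound on $\alpha(G)$, and conversely gives a clean necessary condition that rules out prime labelings for graphs with small independence number.
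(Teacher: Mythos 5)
Your proof is correct and matches the paper's reasoning exactly: the paper does not prove this lemma itself (it cites it from the literature), but the justification it sketches --- that even labels must be assigned to pairwise non-adjacent vertices, and there are $\left\lfloor |V(G)|/2 \right\rfloor$ of them among $1,\ldots,|V(G)|$ --- is precisely your argument. No gaps; the injectivity and floor-count details you flag are handled correctly.
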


The generalized Petersen graph, denoted $GP(n,k)$ where $n\geq 3$ and $1\leq k \leq \lfloor (n-1)/2\rfloor$, consists of $2n$ vertices $v_1,\ldots, v_n,u_1,\ldots, u_n$.  It has $3n$ edges described by $v_i\sim v_{i+1}$, $u_i\sim u_{i+k}$, and $v_i\sim u_i$ where indices are calculated modulo $n$.  In the particular case of $k=1$, the two sets of vertices form $n$-gons that are connected to form a prism graph, which will be our first graph that we investigate.

When $n$ is odd, $GP(n,1)$ consists of two odd cycles connected by a perfect matching.  Only $(n-1)/2$ vertices on each cycle can be independent, hence $\alpha(GP(n,1))=n-1$ for odd $n$.  Then by Lemma~\ref{ind}, $GP(n,1)$ is not prime in this case, a property which extends to any value of $k$ when $n$ is odd. In fact, $GP(n,k)$ was proven to not be prime in~\cite{PG} for any odd value of $n$ as well as when $n$ and $k$ are both even. Independence numbers for generalized Petersen graphs for certain cases have been determined~\cite{BHM,BEA,FGS} that help provide bounds for the minimum coprime numbers of $GP(n,k)$ in the non-prime cases.

The remaining case of $GP(n,k)$ with $n$ even and $k$ odd is conjectured to be prime for all such $n$ and $k$.  When $k=1$, the prism graph $GP(n,1)$ has been proven to be prime in many specific cases in~\cite{HLYZ} such as when $2n+a$ or $n+a$ are prime for several small values of $a$.  Additional cases of $GP(n,1)$ were proven to be prime in~\cite{PG}.  Dean~\cite{dean} proved the conjecture that all ladders are prime.  Since ladders are simply prism graphs with two edges removed, one might expect his prime labeling to carry over to $GP(n,1)$.  However, when applying this labeling to $GP(n,1)$, these two additional edges do not maintain the relatively prime condition for all~$n$.

While some results have been found on $GP(n,3)$ in~\cite{HLYZ2}, most work involving prime labelings of the generalized Petersen graph has been focused on the prism graph. In~\cite{SSCEHRW} a number theoretic conjecture was made to bolster the conjecture that $GP(n,1)$ is prime for all even $n$. Conjecture 2.1 in~\cite{SSCEHRW} stated that for any even integer $n$, there exists an $s\in [1,n-1]$ such that $n+s$ and $2n+s$ are prime.  By verifying this conjecture for all even $n$ up to $2.468\times 10^9$, they demonstrated $GP(n,1)$ is prime with even $n$ up to that value.  

We conclude this section with the following observations, which will be used without citation in many of the theorems throughout this paper.

\begin{obs}\label{easy}
For positive integers $a,b$, and $k$, the following hold true: 
\begin{packedItem}
\item $\gcd\{a,b\}=\gcd\{ka,b\}$.
\item $\gcd\{a,b\}=\gcd\{a-b,b\}$.
\item $\gcd\{a,b\}=\gcd\{a+b,b\}$.
\item If $a+b$ is prime, then $\gcd\{a,b\}=1$. 
\item If $a-b$ is a prime $p$ and both $a$ and $b$ are not multiples of $p$, then $\gcd\{a,b\}=1$. 

\end{packedItem}
\end{obs}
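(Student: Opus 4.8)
The plan is to reduce all five parts to the elementary principle that the greatest common divisor of a pair is determined by its set of common divisors: if two pairs of positive integers have exactly the same common divisors, then they have the same gcd. With this in hand, each item becomes a short divisor-chase, so I would organize the argument around computing common divisors rather than manipulating the gcd symbol directly.

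First I would dispatch the additive rules, parts two and three, simultaneously. Fix positive integers $a,b$ and let $d$ be any common divisor of $a$ and $b$. Then $d$ divides $a\pm b$, and of course $d\mid b$, so $d$ is a common divisor of $a\pm b$ and $b$. Conversely, if $d\mid a\pm b$ and $d\mid b$, then $d\mid (a\pm b)\mp b=a$, so $d$ is again a common divisor of $a$ and $b$. Hence the pairs $\{a,b\}$ and $\{a\pm b,b\}$ share precisely the same common divisors, and in particular the same largest one, giving $\gcd\{a,b\}=\gcd\{a\pm b,b\}$.

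Parts four and five then follow quickly from this divisor viewpoint. For part four, suppose $a+b$ is a prime $p$ and let $d=\gcd\{a,b\}$; since $d\mid a$ and $d\mid b$ we get $d\mid a+b=p$, so $d\in\{1,p\}$, and because $a,b\geq 1$ we have $d\leq a<a+b=p$, forcing $d=1$. For part five, with $a-b=p$ prime and $p$ dividing neither $a$ nor $b$, any common divisor $d$ of $a$ and $b$ divides $a-b=p$, so $d\in\{1,p\}$; the value $d=p$ is impossible since it would force $p\mid a$, so again $d=1$.

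The only part needing more than a direct divisor-chase is the multiplicative identity in part one, which as literally written requires the hypothesis $\gcd\{k,b\}=1$ (otherwise $\gcd\{1,4\}\neq\gcd\{2,4\}$ already breaks it), the form in which it will be applied. Under that hypothesis I would argue as follows. Writing $d=\gcd\{ka,b\}$, from $d\mid b$ and $\gcd\{k,b\}=1$ we obtain $\gcd\{d,k\}=1$, so Euclid's lemma applied to $d\mid ka$ yields $d\mid a$; thus $d\mid\gcd\{a,b\}$. The reverse divisibility $\gcd\{a,b\}\mid\gcd\{ka,b\}$ is immediate since $\gcd\{a,b\}$ divides both $ka$ and $b$, and equality of these two positive integers follows. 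I expect this to be the only place where any care is needed, precisely because of that coprimality condition; every remaining step is routine.
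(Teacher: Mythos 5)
Your proof is correct, but there is no argument in the paper to compare it against: the statement is presented as an Observation, prefaced only by the remark that these facts ``will be used without citation,'' and no proof is given. Your divisor-chase treatment of parts two through five is complete and matches the standard reasoning the authors are implicitly relying on. The genuinely valuable content in your write-up is the point about part one: as literally stated it is false, e.g.\ $\gcd\{1,4\}=1\neq 2=\gcd\{2,4\}$ with $a=1$, $b=4$, $k=2$, so the identity requires the hypothesis $\gcd\{k,b\}=1$, under which your Euclid's-lemma argument is correct. It is worth noting that this does not invalidate any application in the paper: every invocation of part one is of the form ``conclude $\gcd\{a,b\}=1$ because $\gcd\{ka,b\}=1$'' (for instance $\gcd\{n+1,2n+1\}=\gcd\{2n+2,2n+1\}=1$ in Theorem~\ref{n}, where $k=2$ and $b=2n+1$ is odd), and that direction needs only the unconditional divisibility $\gcd\{a,b\}\mid\gcd\{ka,b\}$ --- and in each such use $k$ is in fact coprime to $b$, so even the full equality holds. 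So your proposal both supplies the missing proof and repairs the one imprecisely stated item; if the Observation were to be proved in the paper, your corrected formulation of part one is the one that should appear.
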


\section{Prism Graphs}

In this section, we provide several specific results for the minimum coprime number of $GP(n,1)$ and conjecture that $\pr(GP(n,1))=2n+1$ for all odd $n$. Many of the theorems follow a similar proof strategy, but a general construction extended from our techniques seems unlikely without the resolution of longstanding number theory conjectures.  See Figure~\ref{gp11_1} for an example of our first result showing a minimum coprime labeling for the prism $GP(11,1)$.

\begin{theorem}\label{n}
If $n$ is prime, then $\pr(GP(n,1))=2n+1$.
\end{theorem}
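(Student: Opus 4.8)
The plan is to prove the two inequalities $\pr(GP(n,1)) \ge 2n+1$ and $\pr(GP(n,1)) \le 2n+1$ separately, with essentially all of the work going into an explicit labeling for the upper bound.

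For the lower bound I would simply invoke the independence argument already recorded in Section~2: since $n$ is prime it is odd (the cases of interest are $n\ge 3$), so $GP(n,1)$ is a pair of odd cycles joined by a perfect matching with $\alpha(GP(n,1))=n-1<n=\lfloor |V|/2\rfloor$, and Lemma~\ref{ind} rules out a prime labeling. A coprime labeling on $2n$ vertices using labels from $\{1,\dots,2n\}$ would be a prime labeling, so no such labeling exists and $\pr(GP(n,1))\ge 2n+1$.

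For the upper bound I would exhibit a coprime labeling drawn from $\{1,\dots,2n+1\}$. A counting observation guides the choice: the even-labeled vertices must form an independent set, of which there are at most $n-1$, whereas $\{1,\dots,2n+1\}$ contains $n$ even labels; since exactly one label is omitted, it must be an even one and all $n+1$ odd labels are retained. I would discard $2n$ and take
\[
v_i = i \ \ (1 \le i \le n), \qquad u_i = n+i \ \ (1 \le i \le n-1), \qquad u_n = 2n+1 .
\]
The outer cycle then carries the consecutive integers $1,\dots,n$ with wrap $\gcd\{n,1\}=1$, so its edges are coprime automatically. The spokes $v_i\sim u_i$ reduce via Observation~\ref{easy} to $\gcd\{i,n+i\}=\gcd\{i,n\}$ for $i<n$ and to $\gcd\{n,2n+1\}=\gcd\{n,1\}=1$ at $i=n$; here primality of $n$ is exactly what forces $\gcd\{i,n\}=1$ for all $1\le i\le n-1$. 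On the inner cycle the edges among $u_1,\dots,u_{n-1}$ join consecutive integers, and the two edges meeting $u_n$ give $\gcd\{2n-1,2n+1\}=\gcd\{2,2n-1\}=1$ and $\gcd\{2n+1,n+1\}=\gcd\{n,n+1\}=1$.

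The only place primality is genuinely used is the spoke condition, and the one real design decision is to spend the extra odd label $2n+1$ on $u_n$: this single move simultaneously repairs the spoke $\gcd\{n,2n\}=n$ and the inner wrap-around $\gcd\{2n,n+1\}=\gcd\{n-1,n+1\}=2$ that would otherwise break the naive shift $u_i=n+i$. I therefore expect no serious obstacle beyond checking these few gcd computations; the subtlety is recognizing that discarding the single even label $2n$ while reassigning $u_n\mapsto 2n+1$ resolves every conflict at once.
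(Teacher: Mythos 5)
Your proposal is correct and is essentially identical to the paper's own proof: both use the labeling $v_i=i$, $u_i=n+i$ for $i\le n-1$, $u_n=2n+1$, verify the same gcd conditions (with primality of $n$ handling the spokes), and obtain the lower bound from $\alpha(GP(n,1))=n-1$ via Lemma~\ref{ind}.
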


\begin{figure}[htb]
    \centering
    \includegraphics[width=2in]{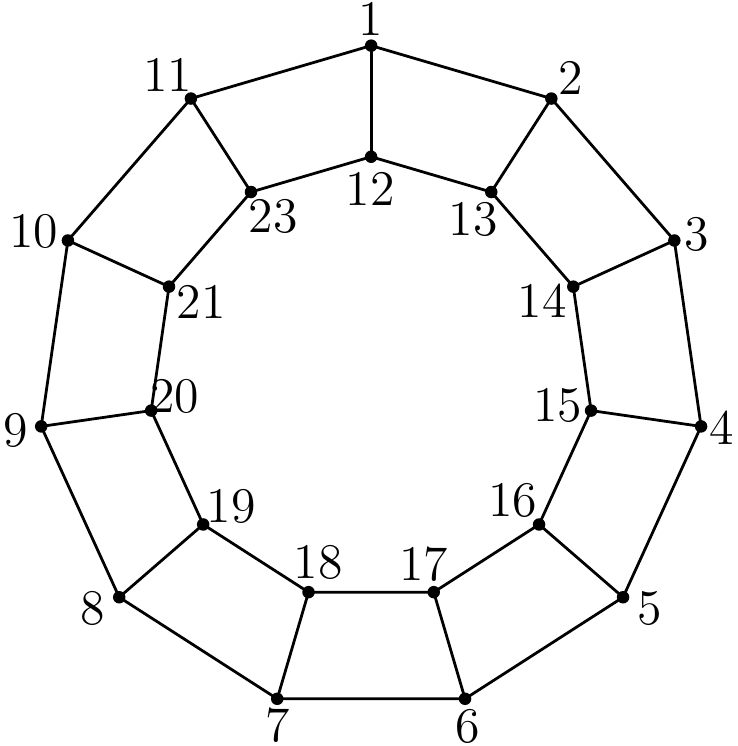}
    \caption{Minimum coprime labeling for GP(11,1)}
    \label{gp11_1}
\end{figure}

\begin{proof}
Label $v_1,\ldots, v_n$ as $1,\ldots, n$ and $u_1,\ldots, u_n$ with $n+1,\ldots, 2n-1, 2n+1$ respectively. All adjacent pairs in $\{v_1,\ldots,v_n\}$ and in $\{u_1,\ldots,u_n\}$ have  consecutive labels except for $v_1v_n$, $u_{n-1}u_n$, and $u_1u_n$. The first pair includes $1$ as one of the labels, and the second pair is labeled by consecutive odd labels.  Lastly, $u_1u_n$ have relatively prime labels since $\gcd\{n+1,2n+1\}=\gcd\{2n+2,2n+1\}=1$.  

It remains to show that the labels on $u_i$ and $v_i$ are relatively prime for each $i$. For $i\leq n-1$, the difference between the labels on $v_i$ and $u_i$ is $n$.  Since $n$ is assumed to be prime, these pairs are relatively prime by Observation~\ref{easy}. Finally when $i=n$, we have $\gcd\{2n+1,n\}=\gcd\{n+1,n\}=1$.  Therefore, this is a coprime labeling, hence $\pr(GP(n,1))\leq 2n+1$.  Since the independence number of $GP(n,1)$ is $n-1$ when $n$ is odd, a prime labeling is not possible, making $\pr(GP(n,1))>2n$.  Thus when $n$ is prime, we have $\pr(GP(n,1))=2n+1$.
\end{proof}

\begin{theorem}\label{n+2}
If $n+2$ is prime, then $\pr(GP(n,1))=2n+1$.
\end{theorem}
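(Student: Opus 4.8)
The lower bound is identical to the one in Theorem~\ref{n}. Since $n+2$ is an odd prime (necessarily exceeding $3$, as $n\geq 3$), $n$ is odd, so $\alpha(GP(n,1))=n-1<n=\lfloor 2n/2\rfloor$, and Lemma~\ref{ind} rules out a prime labeling; hence $\pr(GP(n,1))>2n$. It therefore suffices to exhibit a coprime labeling whose largest label is $2n+1$.

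The plan is to reuse the skeleton of the proof of Theorem~\ref{n}, but to arrange the matching edges $v_iu_i$ so that the two labels differ by the prime $n+2$ rather than by $n$. Concretely, I would again label $v_1,\ldots,v_n$ as $1,\ldots,n$, and then set $u_i=n+2+i$ for $1\le i\le n-1$ (so these labels run through $n+3,\ldots,2n+1$), while assigning the wrap-around label $u_n=n+2$ and leaving $n+1$ as the single unused value. This uses exactly $2n$ distinct labels from $\{1,\ldots,2n+1\}$.

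With this labeling the three edge types are checked as follows. Along the $v$-cycle every pair $v_iv_{i+1}$ receives consecutive integers and $v_nv_1$ involves the label $1$, so all are coprime. On the $u$-cycle the pairs $u_iu_{i+1}$ for $1\le i\le n-2$ are consecutive integers, and $u_nu_1$ receives the consecutive labels $n+2,n+3$; only the pair $u_{n-1}u_n$ requires separate attention. For the matching, each $v_iu_i$ with $i\le n-1$ has labels differing by the prime $n+2$ with neither label a multiple of $n+2$ (the $v$-labels lie in $[1,n-1]$ and the $u$-labels lie in the open interval $(n+2,2(n+2))$), so Observation~\ref{easy} gives coprimality; and $v_nu_n$ has labels $n$ and $n+2$, which are coprime since $n$ is odd.

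The only genuinely new verification, and the step I expect to be the main obstacle, is the edge $u_{n-1}u_n$, whose labels are $2n+1$ and $n+2$. Here I would use the primality of $n+2$ directly: writing $2n+1=2(n+2)-3$ gives $\gcd\{2n+1,n+2\}=\gcd\{3,n+2\}=1$, since $n+2$ is a prime exceeding $3$. Combining this coprime labeling with the lower bound then yields $\pr(GP(n,1))=2n+1$.
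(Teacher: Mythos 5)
Your proposal is correct and is essentially identical to the paper's proof: the same labeling ($v_i\mapsto i$, $u_i\mapsto n+2+i$ for $i\le n-1$, $u_n\mapsto n+2$, skipping $n+1$), the same edge verifications using the primality of $n+2$ for the matching edges and for $u_{n-1}u_n$, and the same independence-number lower bound. Your explicit computation $\gcd\{2n+1,n+2\}=\gcd\{3,n+2\}=1$ just spells out a step the paper states more tersely.
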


\begin{proof}
We will construct a labeling $\ell$ in the following manner. Label $v_1,\ldots,v_n$ with the numbers  $1,\ldots,n$, respectively, and the vertices $u_1,\ldots,u_n$ with the labels $n+3,\ldots, 2n+1, n+2$, respectively.   


Edges between vertices in $\{v_1,\ldots,v_n\}$ have vertices with consecutive labels or contain the label~$1$, and so
$\gcd\{\ell(v_i),\ell(v_{i+1})\}=\gcd\{\ell(v_1),\ell(v_n)\}=1$. Edges between vertices in $\{u_1,\ldots,u_n\}$ have vertices with consecutive labels or with labels $n+2$ and $2n+1$ in which $n+2$ is prime.  Hence the labels between pairs of adjacent vertices in $\{u_1,\ldots,u_n\}$ are relatively prime. For $i=1,\ldots, n-1$, since the difference of the labels on vertices $u_i$ and $v_i$ is $n+2$, which is prime, $\gcd\{\ell(u_i),\ell(v_i)\}=1$. Lastly, $u_n$ and $v_n$ are labeled by consecutive odd integers.  Thus, we have a coprime labeling that is minimal since $GP(n,1)$ is not prime for odd $n$.  Thus, $\pr(GP(n,1))=2n+1$ assuming $n+2$ is prime.
\end{proof}

\begin{theorem}\label{2n+1}
If $2n+1$ is prime, then $\pr(GP(n,1))=2n+1$.
\end{theorem}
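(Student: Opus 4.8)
The plan is to follow the same two-part template as the preceding two theorems: establish the lower bound $\pr(GP(n,1))\geq 2n+1$ from non-primality, then exhibit an explicit coprime labeling attaining $2n+1$. As in Theorem~\ref{n}, for odd $n$ the independence number is $n-1<\left\lfloor 2n/2\right\rfloor=n$, so Lemma~\ref{ind} rules out a prime labeling and forces $\pr(GP(n,1))>2n$. The whole difficulty is therefore the construction of a coprime labeling whose largest label is exactly $2n+1$.

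For the upper bound I would exploit the hypothesis that $2n+1$ is prime in two separate ways. First, I would try the ``reflection'' assignment $\ell(v_i)=i$ and $\ell(u_i)=2n+1-i$, so that each spoke pair sums to $2n+1$. Then $\gcd\{\ell(v_i),\ell(u_i)\}=\gcd\{i,2n+1-i\}=1$ by Observation~\ref{easy}, since $i+(2n+1-i)=2n+1$ is prime; the $v$-cycle edges are consecutive (or involve the label $1$), and the interior $u$-cycle edges $u_iu_{i+1}$ are consecutive as well, so these are all fine.

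The obstacle is precisely the wrap-around edge $u_1u_n$ of the $u$-cycle: under the reflection $\ell(u_1)=2n$ and $\ell(u_n)=n+1$, both even for odd $n$, so this single edge fails. This is exactly the parity clash underlying the non-primality of the graph, and the second use of the primality of $2n+1$ is to repair it. Since $2n+1$ is prime and strictly larger than every other label, whichever vertex carries it is automatically coprime to all three of its neighbors. I would therefore keep $\ell(u_i)=2n+1-i$ for $i=1,\ldots,n-1$ but reassign $\ell(u_n)=2n+1$, omitting the now-freed label $n+1$. After this single change every edge is coprime: the spoke $v_nu_n$ gives $\gcd\{n,2n+1\}=1$, the edge $u_{n-1}u_n$ gives $\gcd\{n+2,2n+1\}=1$, and the previously bad edge $u_1u_n$ becomes $\gcd\{2n,2n+1\}=1$. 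The resulting labeling uses $\{1,\ldots,n\}\cup\{n+2,\ldots,2n+1\}$ with maximum $2n+1$, giving $\pr(GP(n,1))\leq 2n+1$ and hence equality. I expect the only real subtlety to be checking the three edges incident to $u_n$ after the reassignment; everything else is routine once the reflection idea and the ``free'' prime label $2n+1$ are in place.
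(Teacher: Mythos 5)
Your proposal is correct and takes essentially the same approach as the paper: both use the reflection idea $\ell(v_i)=i$, $\ell(u_i)=2n+1-i$ so that the spokes sum to the prime $2n+1$, and both repair the one bad wrap-around edge of the inner cycle by placing the label $2n+1$ itself on a single inner vertex (the paper puts it on $u_1$ and omits the label $2n$; you put it on $u_n$ and omit $n+1$). This difference is purely cosmetic, and your verification of the edges incident to $u_n$ is sound.
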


\begin{proof}
We construct a labeling $\ell$ by first labelling $v_1,\ldots, v_n$ as $1,\ldots, n$ as in last theorem, but we label $u_1,\ldots, u_n$ in reverse order as $2n+1, 2n-1, 2n-2,\ldots,n+1$.

Edges connecting vertices in $\{v_1,\ldots,v_n\}$ have consecutive labels or contain the label $1$, and so $\gcd\{\ell(v_i),\ell(v_{i+1})\}=\gcd\{\ell(v_1),\ell(v_n)\}=1$. 
Edges between vertices in $\{u_1,\ldots,u_n\}$ have vertices with consecutive labels, with  consecutive odd integer labels, or with the pair of labels $n+1$ and $2n+1$, and so the labels on these adjacent vertices are relatively prime. 
For each $i=2,\ldots, n$, since $\ell(u_i)+\ell(v_i)=2n+1$, we know that $\gcd\{\ell(u_i),\ell(v_i)\}=1$ by Observation~\ref{easy} because $2n+1$ is prime.  Finally, the edge $u_1v_1$ includes the label $1$ on $u_1$.  Thus, this is a coprime labeling that shows $\pr(GP(n,1))=2n+1$ when $2n+1$ is prime. 
\end{proof}

\begin{theorem}\label{2n-1}
If $2n-1$ is prime, then $\pr(GP(n,1))=2n+1$.
\end{theorem}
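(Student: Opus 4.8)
The plan is to mirror Theorems~\ref{n}--\ref{2n+1}: the lower bound $\pr(GP(n,1))\geq 2n+1$ comes for free, since for odd $n$ Lemma~\ref{ind} gives $\alpha(GP(n,1))=n-1<n$ and so no prime labeling exists. It remains to exhibit a coprime labeling whose largest label is $2n+1$. As before I would set $\ell(v_i)=i$, making the $v$-cycle automatic (consecutive labels, with $1$ on the edge $v_1v_n$). The new idea is to force the \emph{sum} $\ell(u_i)+\ell(v_i)=2n-1$ wherever possible: since $2n-1$ is prime, Observation~\ref{easy} makes each such matching edge coprime. Setting $\ell(u_i)=2n-1-i$ realizes this, and as $i$ runs from $2$ to $n-2$ it produces exactly the decreasing block of labels $n+1,\ldots,2n-3$, each coprime to its cycle-successor.

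The difficulty is concentrated at the two ends. The $v$-labels $n-1$ and $n$ cannot be completed to a sum-$(2n-1)$ pair by any label in $\{n+1,\ldots,2n+1\}$, so the three large labels $2n-1,2n,2n+1$ must cover $u_{n-1},u_n,u_1$ (one of the four large labels being omitted). A parity check motivates dropping an even label: the even labels in $\{1,\ldots,2n+1\}$ number $n$, but $\alpha=n-1$, so they cannot all be placed on an independent set. Moreover $2n$ is coprime to neither $n-1$ (common factor $2$, as $n$ is odd) nor $n$ (common factor $n$), which pins down where $2n$ may go.

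I would then split on $n\bmod 3$, noting that $2n-1$ prime (and $>3$) rules out $n\equiv 2\pmod 3$. When $n\equiv 0\pmod 3$ I would omit $2n$ and set $\ell(u_1)=2n-2$, $\ell(u_{n-1})=2n-1$, $\ell(u_n)=2n+1$; the non-consecutive $u$-cycle edges $u_{n-2}u_{n-1}$, $u_{n-1}u_n$, $u_nu_1$ reduce to $\gcd(n+1,3)$, $\gcd(2n-1,2)$, $\gcd(2n-2,3)$, all $1$ in this residue class, while the matching edges at $u_{n-1},u_n$ reduce to $\gcd(n,n-1)=1$ and $\gcd(1,n)=1$. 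When $n\equiv 1\pmod 3$ this fails precisely because $u_n=2n+1$ and $u_1=2n-2$ are then both divisible by $3$; instead I would omit $2n-2$ and move $\ell(u_1)=2n$, keeping $\ell(u_{n-1})=2n-1$ and $\ell(u_n)=2n+1$. Now $u_1u_2$ becomes $\gcd(2n,2n-3)=\gcd(2n,3)=1$ and $u_nu_1$ becomes $\gcd(2n+1,2n)=1$, resolving the conflict, and the remaining edges are checked exactly as before.

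The main obstacle is this seam: the cyclic edge $u_nu_1$ together with the forced placement of the large odd labels creates a genuine common-factor-$3$ obstruction for the single residue class $n\equiv 1\pmod 3$, which is why one uniform construction does not suffice and the case split is needed. Once both constructions are fixed, every edge is a routine gcd computation via Observation~\ref{easy}, and combining $\pr(GP(n,1))\leq 2n+1$ with the independence-number lower bound gives equality.
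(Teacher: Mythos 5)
Your proposal is correct and takes essentially the same approach as the paper: both label the $v$-cycle by $1,2,\ldots$, run the $u$-labels downward so the matching edges have prime sum $2n-1$, note that $2n-1$ prime excludes $n\equiv 2\pmod{3}$, and then split on $n\equiv 0$ versus $n\equiv 1\pmod{3}$ by placing $2n-2$ or $2n$ on $u_1$ (omitting the other). The only cosmetic difference is that the paper assigns $2n-1$ to $v_n$ and $n$ to $u_{n-1}$ while you transpose those two labels, and the gcd verifications go through identically either way.
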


\begin{proof}
Notice that when $n\equiv 2\pmod{3}$, then $2n-1$ is divisible by 3, so we assume $n\not\equiv 2\pmod{3}$.
If $n\equiv 0\pmod{3}$, then we use the labeling defined in Table~\ref{tab:2n-1} where the top row represents $v_1,\ldots,v_n$ and the bottom row represents $u_1,\ldots,u_n$. One can see the pairs $u_iu_{i+1}$ and $v_iv_{i+1}$ have relatively prime labels, where $\gcd\{\ell(u_1),\ell(u_n)\}=\gcd\{2n+1,2n-2\}=1$ since $n$ is a multiple of $3$ in this case.  The pairs $u_iv_i$ have relatively prime labels for trivial reasons or because their sum is $2n+1$, which is assumed to be prime.

If $n\equiv 1\pmod{3}$, then use the same labeling in Table~\ref{tab:2n-1} except $2n-2$ is labeled as $2n$ instead.  This is a coprime labeling for similar reasoning as our first case, except now the pair of labels $2n$ and $2n-3$ on $u_1$ and $u_2$, respectively, are relatively prime since $n\equiv 1\pmod{3}$.  In each case, the labeling is a minimum coprime labeling, proving $\pr(GP(n,1))=2n+1$ assuming $2n-1$ is prime.
\begin{table}[htb]
    \centering
    \begin{tabular}{|c|c|c|c|c|}
    \hline
         1& 2  & $\cdots$ & $n-1$ & $2n-1$   \\
    \hline
         $2n-2$ & $2n-3$ & $\cdots$  & $n$ & $2n+1$  \\
    \hline
    \end{tabular}
    \caption{Labeling for Theorem~\ref{2n-1}}
    \label{tab:2n-1}
\end{table}
\end{proof}


For further results regarding minimum coprime labelings of prism graphs in other specific cases, see Appendix A.  Using Theorems~\ref{n}-\ref{2n-1}, along with Theorems~\ref{n+4}-\ref{n+6} in the Appendix, an explicit minimum coprime labeling is given for $GP(n,1)$ for all $n$ up to 1641. The following is a more general construction assuming a particular pair of prime numbers exists.


\begin{theorem}\label{conj}
Let $n\geq 3$ be odd.  If there exists an $s\in [3,n-1]$ such that $n+s+1$ and $2n+s+2$ are prime, then $\pr(GP(n,1))= 2n+1$. 
\end{theorem}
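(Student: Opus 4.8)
The plan is to reuse the template of Theorems~\ref{n}--\ref{2n-1}: set $\ell(v_i)=i$ for $i=1,\ldots,n$, so the inner cycle is automatically coprime (consecutive labels, with the edge $v_nv_1$ carrying the label $1$), and then place the labels $n+1,\ldots,2n+1$ on the outer cycle so that every spoke $u_iv_i$ and every outer edge $u_iu_{i+1}$ is coprime. The workhorse is the fact from Observation~\ref{easy} that a prime sum forces coprimality: if I can make $\ell(u_i)+\ell(v_i)$ equal one of the two given primes, the spoke is good for free. Writing $P_1=n+s+1$ and $P_2=2n+s+2$, the natural choice is $\ell(u_i)=P_1-i$ on an initial arc and $\ell(u_i)=P_2-i$ on the rest. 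Checking ranges, $P_1-i$ lands in $[n+1,2n+1]$ exactly when $i\le s$ and $P_2-i$ does so exactly when $i\ge s+1$, so the two arcs are forced to be $\{1,\ldots,s\}$ and $\{s+1,\ldots,n\}$, and together they use every label in $[n+1,2n+1]$ except $n+s+1$.

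The step I expect to be the real obstacle is the wrap-around edge $u_nu_1$. With the labeling just described $\ell(u_1)=n+s$ and $\ell(u_n)=n+s+2$; since $P_1=n+s+1$ is an odd prime, $n+s$ is even, so both of these are even and the edge fails. This is not cosmetic but a genuine parity obstruction: the even-indexed spokes $i=2,4,\ldots,n-1$ must receive odd outer labels, and $[n+1,2n+1]$ contains exactly $(n+1)/2$ odd numbers, so \emph{skipping the odd label} $n+s+1$ leaves only $(n-1)/2$ odd labels---just enough for the even spokes and none left over for positions $1$ and $n$, which are adjacent on the outer cycle and would both be forced to be even.

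My fix is to spend the one genuinely free spoke to repair the parity. Because $\ell(v_1)=1$, the spoke $u_1v_1$ is coprime no matter what sits on $u_1$, so I will not make $u_1$ obey the sum rule. Instead I set $\ell(u_1)=n+s+1$ (the odd prime itself) and skip the \emph{even} label $n+s$, keeping $\ell(u_i)=P_1-i$ for $i=2,\ldots,s$ and $\ell(u_i)=P_2-i$ for $i=s+1,\ldots,n$. Now the outer labels are precisely $[n+1,2n+1]\setminus\{n+s\}$, the wrap-around edge becomes $\gcd\{n+s+2,\,n+s+1\}=1$ (consecutive integers), and the new edge $u_1u_2$ is $\gcd\{n+s+1,\,n+s-1\}$, which is $1$ because the two differ by $2$ and $n+s+1$ is an odd prime and so cannot divide the smaller positive number $n+s-1$.

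The remaining verifications are routine and I would dispatch them in one pass. The spokes for $i\ge 2$ work exactly as planned (sums $P_1$ or $P_2$, both prime); the interior outer edges on each arc are consecutive integers; and the single junction $u_su_{s+1}$ is $\gcd\{n+1,2n+1\}=\gcd\{n+1,n\}=1$. The hypotheses $s\ge 3$ and $s\le n-1$---together with $n+s+1$ being an odd prime, which forces $s$ odd and in fact $s\le n-2$---guarantee that both arcs are nonempty, so no case degenerates. This exhibits a coprime labeling whose largest label is $2n+1$; since $GP(n,1)$ has independence number $n-1$ for odd $n$, it is not prime by Lemma~\ref{ind}, so $\pr(GP(n,1))>2n$, and therefore $\pr(GP(n,1))=2n+1$.
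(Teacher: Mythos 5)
Your labeling is exactly the one in the paper's Table~\ref{s-conj}: $\ell(v_i)=i$, $\ell(u_1)=n+s+1$, $\ell(u_i)=n+s+1-i$ for $i=2,\ldots,s$, and $\ell(u_i)=2n+s+2-i$ for $i=s+1,\ldots,n$, skipping the even label $n+s$, with the same verification (label $1$ on $v_1$, prime sums on the spokes, consecutive or consecutive-odd labels on the outer cycle, the pair $n+1,2n+1$, and the independence-number lower bound). The proposal is correct and takes essentially the same approach as the paper, differing only in the added motivation for why $u_1$ must carry the prime itself.
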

\begin{proof}
We use the labeling defined in Table~\ref{s-conj} where the top row represents the vertices $v_1,\ldots,v_n$ and the bottom row represents the vertices $u_1,\ldots,u_n$.  The vertex pairs on edges of the form $u_1v_1$, $u_iu_{i+1}$ and $v_iv_{i+1}$ either contains the label 1, are consecutive integers, are consecutive odd integers, or are the relatively prime pair $n+1$ and $2n+1$.  The adjacent pairs $u_iv_i$ for $i=2,\ldots, s$ have labels that add to $n+s+1$, and the pairs $u_iv_i$ for $i=s+1, \ldots, n$ are labeled by integers whose sum is $2n+s+2$.  Since both of these sums are assumed to be prime, the labels on those pairs are relatively prime as well.  Thus, since $GP(n,1)$ is not prime when $n$ is odd, we have constructed a minimum coprime labeling proving that $\pr(GP(n,1))=2n+1$ if such a value $s$ exists.
\end{proof}

\begin{table}[htb]
    \centering
    \begin{tabular}{|c|c|c|c|c|c|c|c|c|c|}
    \hline
         1& 2 & $\cdots$ & $s-1$ & $s$ & $s+1$ & $s+2$ & $\cdots$ & $n-1$ & $n$ \\
    \hline
         $n+s+1$ & $n+s-1$ & $\cdots$ & $n+2$ & $n+1$ & $2n+1$ & $2n$ & $\cdots$ & $n+s+3$ & $n+s+2$  \\
    \hline
    \end{tabular}
    \caption{Labeling for Theorem~\ref{conj}}
    \label{s-conj}
\end{table}

Recall that Conjecture 2.1 in~\cite{SSCEHRW} states that for all even integers $n$, there is an $s<n$ such that $n+s$ and $2n+s$ are both prime.  If this is true for all even integers, then the previous theorem would prove the subsequent conjecture for all odd $n$ since applying Conjecture 2.1 to the even integer $n+1$ would result in $n+s+1$ and $2n+s+2$ being prime.  Results in~\cite{SSCEHRW} combine with Theorem~\ref{conj} to confirm the following conjecture for odd $n<2.468\times 10^9$.

\begin{conjecture}
For all odd $n\geq 3$, $\pr(GP(n,1))=2n+1$.
\end{conjecture}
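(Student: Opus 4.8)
The plan is to prove the two matching bounds $\pr(GP(n,1)) = 2n+1$ for every odd $n \geq 3$. The lower bound is the easy half and is already fully rigorous: for odd $n$ the graph $GP(n,1)$ is two odd cycles joined by a perfect matching, so $\alpha(GP(n,1)) = n-1$. Since $|V(GP(n,1))| = 2n$, we have $n-1 < n = \lfloor |V|/2\rfloor$, and so Lemma~\ref{ind} forbids a prime labeling. Hence $\pr(GP(n,1)) > 2n$, that is, $\pr(GP(n,1)) \geq 2n+1$. All of the difficulty lies in the upper bound $\pr(GP(n,1)) \leq 2n+1$.

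For the upper bound I would not build a new construction but invoke Theorem~\ref{conj}, which already reduces the whole problem to a purely number-theoretic existence statement: it suffices to produce, for each odd $n \geq 3$, an index $s \in [3,n-1]$ for which both $n+s+1$ and $2n+s+2$ are prime. Writing $m = n+1$ (even, since $n$ is odd), these two quantities are exactly $m+s$ and $2m+s$, so the required claim is precisely Conjecture 2.1 of \cite{SSCEHRW} applied to the even integer $m$. The only gap between the two statements is the range of $s$: Conjecture 2.1 allows $s \in [1,m-1] = [1,n]$, while Theorem~\ref{conj} wants $s \in [3,n-1]$. This discrepancy is cosmetic. An even $s$ can never occur, since $2n+s+2$ must be an odd prime; and the two remaining out-of-range odd values force $n+2$ to be prime (when $s=1$) or $2n+1$ to be prime (when $s=n$), which are exactly the hypotheses of Theorems~\ref{n+2} and \ref{2n+1}. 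Thus in every case either Theorem~\ref{conj} applies directly or one of the explicit labelings of Theorems~\ref{n}--\ref{2n-1} already yields $\pr(GP(n,1)) = 2n+1$.

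The main obstacle is therefore the simultaneous-primality statement itself, and this is genuinely hard: it is a prime-tuples problem of the same flavor as the twin prime conjecture. The natural strategy is a two-part attack. For small $n$ one checks the existence of a valid $s$ by direct computation, and the range $n < 2.468\times 10^9$ recorded in \cite{SSCEHRW} already supplies this for an enormous initial segment. For large $n$ one would appeal to the Hardy--Littlewood prime $k$-tuples heuristic: the number of $s \in [1,m]$ with $m+s$ and $2m+s$ both prime should be asymptotic to $\mathfrak{S}\, m/(\log m)^2$ for a positive singular series $\mathfrak{S}$, a count that tends to infinity, so admissible $s$ ought to be abundant rather than merely present. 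Converting this heuristic into a theorem is exactly where current technology fails: sieve methods and the circle method can handle almost-prime analogues but cannot yet guarantee a single admissible $s$ for \emph{every} even $m$.

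A secondary avenue worth probing is whether the rigidity of the two-block construction underlying Theorem~\ref{conj} can be loosened. One could partition the labeling into several blocks, each contributing its own prime sum on the matching edges $u_i v_i$; if the constraints could be arranged so that the needed primes come from short intervals rather than from one tightly coupled pair, the existence of a valid labeling might follow from stronger results on primes in short intervals or from a weaker, provable tuple statement. I expect, however, that any such scheme still ultimately demands an unproven number-theoretic input, consistent with the authors' remark that a general construction ``seems unlikely without the resolution of longstanding number theory conjectures.''
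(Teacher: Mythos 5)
This statement is a conjecture: the paper does not prove it, and your proposal---correctly---does not claim to either. Your reduction is exactly the paper's own: the lower bound $\pr(GP(n,1))\geq 2n+1$ comes from $\alpha(GP(n,1))=n-1$ together with Lemma~\ref{ind}, and the upper bound is reduced via Theorem~\ref{conj} to Conjecture 2.1 of \cite{SSCEHRW} applied to the even integer $n+1$, which is verified computationally for $n<2.468\times 10^9$ but open in general, so both you and the authors are left with the same unproven prime-tuples statement. The one substantive point where your write-up is tighter than the paper's is the range mismatch, which the paper glosses over: Conjecture 2.1 supplies $s\in[1,n]$ while Theorem~\ref{conj} requires $s\in[3,n-1]$, and you correctly dispose of the stray values by noting that even $s$ is impossible (since $2n+s+2$ would be even) and that the boundary cases $s=1$ and $s=n$ force $n+2$ or $2n+1$ prime, which are exactly the hypotheses of Theorems~\ref{n+2} and~\ref{2n+1}. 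Your Hardy--Littlewood discussion of why the number-theoretic input should hold, and why current sieve technology cannot deliver it, is consistent with the authors' own assessment; no route you or they propose closes the gap.
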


\section{Generalized Petersen Graphs with $k=2$}
We next consider the generalized Petersen graph in the case of $k=2$.
The vertices of $GP(n,2)$ with $n\geq 5$ are still referred to as $v_1,\ldots,v_n$,$u_1,\ldots,u_n$ with the edges of the forms $v_i v_{i+1}$, $u_i u_{i+2}$, and $v_i u_i$ in which indices calculated modulo $n$.

The independence number for generalized Petersen graphs when $k=2$ is given by the formula~$\lfloor\frac{4n}{5}\rfloor$, as shown in~\cite{BHM}
through their study of minimum vertex covers of $GP(n,2)$.  This results in the generalized Petersen graph with $k=2$ not being prime for any value of $n$.  The denominator of this formula for the independence number provides a natural direction by which to create an independent set for this graph by dividing $GP(n,2)$ into blocks that include $5$ of the $v_i$ and $5$ of the $u_i$ vertices. 
We utilize this technique in the following proof but limit ourselves for now to the case when $n$ is a multiple of $5$.

\begin{lemma}\label{GPn2Lemma}
Let $m$ be a positive integer. Then $\pr(GP(5m,2))=12m-1$.
\end{lemma}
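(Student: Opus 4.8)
The plan is to prove the matching bounds $\pr(GP(5m,2))\ge 12m-1$ and $\pr(GP(5m,2))\le 12m-1$ separately, with the second being the substantial half.

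For the lower bound I would combine a parity count with the independence number. In any coprime labeling with largest label $M$, the vertices receiving even labels must form an independent set, since any two even labels share the factor $2$. As recorded in the text, $\alpha(GP(5m,2))=\lfloor 4(5m)/5\rfloor = 4m$, so at most $4m$ of the $10m$ vertices can carry even labels. Hence at least $10m-4m=6m$ vertices carry odd labels, and since $\{1,\dots,M\}$ contains only $\lceil M/2\rceil$ odd integers, we need $\lceil M/2\rceil\ge 6m$, i.e. $M\ge 12m-1$. This forces $\pr(GP(5m,2))\ge 12m-1$ and simultaneously re-explains why the graph cannot be prime.

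For the upper bound I would exhibit an explicit coprime labeling using every odd integer in $[1,12m-1]$ together with exactly $4m$ of the even integers. First I would fix a maximum independent set to receive the even labels; a convenient periodic choice is
\[
I=\{v_i : i\equiv 1,3 \!\!\pmod 5\}\cup\{u_i : i\equiv 0,4 \!\!\pmod 5\},
\]
which has size $2m+2m=4m$ and is independent: the chosen outer indices never differ by $1$, the chosen inner indices never differ by $2$ (so no edge $u_iu_{i+2}$ lies inside $I$), and the outer and inner residues are disjoint (so no spoke lies inside $I$). Placing the $4m$ even labels on $I$ and the $6m$ odd labels on its complement removes the factor-$2$ obstruction on every edge at once. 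A short computation modulo $5$ then shows the complement carries exactly three ``odd--odd'' edges per block, namely $v_{5j+4}v_{5j+5}$, $u_{5j+1}u_{5j+3}$, and the spoke $v_{5j+2}u_{5j+2}$; all other edges join a vertex of $I$ to a vertex outside it.

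The remaining task, and the main obstacle, is to decide which $2m-1$ even labels to omit and in what order to lay down all the labels so that every edge is genuinely coprime, not merely of opposite parity. Since consecutive integers are coprime (Observation~\ref{easy}), I would arrange the labels so that adjacent vertices almost always receive nearly consecutive values and then resolve the few remaining edges by hand. The delicate prime factors are $3$ and $5$: because the index period is $5$ and labels reach $12m-1$, I must ensure that no edge---in particular the skip-two edges $u_iu_{i+2}$ and the three odd--odd edges above---joins two multiples of $3$ or two multiples of $5$. I expect this to require a fixed repeating label pattern (cleanest to present as a short table, with period a multiple of $15$) verified once on a generic block, together with a separate treatment of the wrap-around block where the inner cycle closes up. Here the parity of $m$ intervenes, since $u_i\sim u_{i+2}$ joins the inner vertices into a single cycle when $m$ is odd but two cycles when $m$ is even; checking coprimality across this seam uniformly in $m$ is where the real care is needed, whereas the interior of each block follows the periodic pattern routinely.
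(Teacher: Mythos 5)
Your lower bound is complete and is exactly the paper's argument: the independence number $\alpha(GP(5m,2))=4m$ caps the number of even labels, forcing $6m$ odd labels and hence a largest label of at least $12m-1$. The gap is in the upper bound, where your proposal stops precisely where the proof has to begin. Choosing a maximum independent set to host the even labels only neutralizes the prime $2$; everything else --- which $2m-1$ even labels to omit, which label goes on which vertex, and the verification that all $15m$ edges carry coprime pairs --- is deferred to a pattern you ``expect'' to exist and never exhibit. That construction is the entire content of the paper's proof: an explicit per-block assignment of the labels $12k+1,\ldots,12k+11$ (Equation~\eqref{initiallabel}), followed by a lengthy case analysis keyed to the divisibility of $12k-1$, $12k-3$, and $12k+5$ by $5$, $7$, $11$, and $13$, in which labels are swapped in carefully chosen ways and every new adjacency is rechecked. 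Nothing in your proposal produces or replaces that work, so what you have is a plan, not a proof.

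Moreover, your claim that ``the delicate prime factors are $3$ and $5$'' is unjustified, and the strictly periodic pattern you propose faces a concrete obstruction. If a labeling repeats with label period $P$, then for an edge whose two labels differ by $d$ the relevant condition is $\gcd\{a,d\}=1$ for the endpoint $a$ in every repetition; this stays true under the shift $a\mapsto a+P$ only when every prime dividing $d$ divides $P$. With your period (a multiple of $15$, necessarily also of the per-block shift $12$, so in effect $60$), every adjacent difference must factor over $\{2,3,5\}$. Such a pattern does exist blockwise --- the paper's initial labeling has differences $1,2,3,5,6,9$ only --- but the difference-$5$ edges then kill per-block periodicity: labels shift by $12\equiv 2\pmod 5$ per block, so within any five consecutive blocks each difference-$5$ edge has both endpoints divisible by $5$ exactly once. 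Hence identical offsets on every block cannot work, and you would need a genuinely different assignment on each of the five blocks of a period, plus a separate treatment of the wrap-around when $m$ is not a multiple of the period; none of this is carried out. It is exactly the swaps needed to dodge these multiples of $5$ that introduce adjacent differences of $7$, $11$, and $13$ in the paper's construction, which is why those primes appear in its case analysis and why your restriction to $3$ and $5$ is optimistic. (Incidentally, the parity of $m$ is a red herring: the edges $u_iu_{i+2}$ crossing the seam are the same whether the inner vertices form one cycle or two, and the paper handles them simply by placing the labels $1$, $2$, $4$ on $u_1$, $v_1$, $u_2$.)
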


\begin{proof}
We aim to construct a coprime labeling $\ell$ and later will show that it is minimal. 
We begin by assigning $v_1,\ldots,v_5$ the labels  $2,3,5,8,9$ and assigning $u_1,\ldots,u_5$ the labels $1,4,6,7,11$, respectively. 
One can verify these ten labels form a coprime labeling when $m=1$.  For $m>1$ we then define the following labeling for the block of ten vertices $v_{5k+1},\ldots,v_{5k+5},u_{5k+1},\ldots,u_{5k+5}$ for each $1\leq k<m$
\begin{align}\label{initiallabel}
\begin{array}{ccccccccccc}
    \ell(v_{5k+1}) = 12k+2 && \ell(v_{5k+4}) = 12k+8 && \ell(u_{5k+1}) = 12k+1 && \ell(u_{5k+4}) = 12k+7    \\
    \ell(v_{5k+2})= 12k+3 && \ell(v_{5k+5}) = 12k+9 && \ell(u_{5k+2})= 12k+4 &&  \ell(u_{5k+5}) = 12k+11. \\
   \ell(v_{5k+3}) = 12k+5 &&  && \ell(u_{5k+3}) = 12k+10 && 
\end{array}
\end{align}

\begin{figure}[htb]
    \centering
    \includegraphics[width=5in]{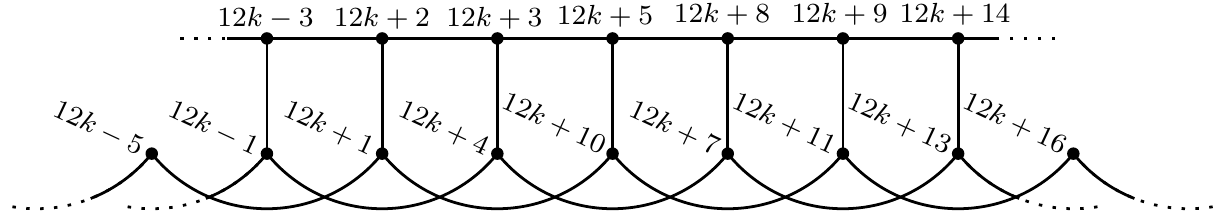}
    \caption{Visual representation of the labeling described in Equation~\eqref{initiallabel}}
    \label{fig:gp5m2}
\end{figure}

See Figure~\ref{fig:gp5m2} for a visual representation of the labeling of the ten vertices in Equation~\eqref{initiallabel} and their adjacent vertices.
The labeling $\ell$ as currently defined is not enough to guarantee each pair of adjacent vertices has relatively prime labels, particularly for pairs of labels that have a difference of~$5$. 
We alter the labeling $\ell$ by addressing cases for specific~$k$ values based on the divisibility of $12k-1$, $12k-3$, and $12k+5$. 
Before altering $\ell$, first note that no adjacent vertices are both labeled by even integers.  
One can also observe that no labels that are multiples of $3$ are assigned to adjacent vertices, including the adjacent pairs whose labels differ by 9.  Additionally, the final vertices in the last block $u_{n-1}$, $u_n$, and $v_n$ are adjacent to the vertices $u_1$, $u_2$, and $v_1$, respectively.  Since $\ell(u_1)=1$, it is relatively prime to the label of $u_{n-1}$.  Likewise, $\ell(u_2)=4$ and $\ell(v_1)=2$, while $\ell(u_{n})=12n-1$ and $\ell(v_n)=12n-3$ are both odd, making those adjacent pairs of labels also relatively prime.

As we define $\ell$ for the upcoming cases, the labels on vertices $u_{5k+4}, u_{5k+5},$ and $v_{5k+5}$ are not changed except in Cases 4b, 4c, and 4d, and this occurs only as the subsequent block is labeled. These three vertices are the only ones within the block of ten vertices $v_{5k+1},\ldots,v_{5k+5},u_{5k+1},\ldots,u_{5k+5}$ that are adjacent to vertices in the subsequent block, hence leaving these three vertices unchanged is essential to guaranteeing that adjacent labels on vertices in different blocks are relatively prime. Let $U_k=\{u_{5k+1},\ldots,u_{5k+5}\}$ and $V_k=\{v_{5k+1},\ldots,v_{5k+5}\}$. 

\noindent \textbf{Case 1:} Suppose that $5\nmid 12k-1,5\nmid 12k-3,5\nmid 12k+5$.

Label the vertices $U_k\cup V_k$ as in Equation~\ref{initiallabel}. As previously observed, pairs of adjacent vertices in $U_k\cup V_k$ or adjacent pairs between the vertices in $U_k\cup V_k$ and $\{v_{5k},u_{5k},u_{5k-1}\}$ do not have labels that share a common factor of 2 or 3.  The adjacent vertex pairs $\{u_{5k},u_{5k+2}\}$,  $\{v_{5k}, v_{5k+1}\}$, and $\{v_{5k+3},u_{5k+3}\}$ have labels that differ by $5$.  Our assumptions for this case ensure that these pairs are not both divisible by $5$, resulting in the relatively prime condition being satisfied.


\noindent \textbf{Case 2:} Assume that $5\mid 12k+5$.

Use the labeling $\ell$ from Equation~\eqref{initiallabel} except we redefine
\begin{align}
\label{case2}
    \begin{array}{c}
         \ell(u_{5k+3})=12k+6.
    \end{array}    
\end{align}
Since we assumed $5\mid 12k+5$, it follows that $5\nmid 12k-1$ and $5\nmid 12k-3$, and thus after applying reasoning from Case 1, we need only check that $\ell(u_{5k+3})$ is relatively prime with the labels of all neighbors of $u_{5k+3}$. Since $u_{5k+3}$ is adjacent to $u_{5k+1}$, $u_{5k+5}$ and $v_{5k+3}$, we need that $\gcd\{12k+6,12k+1\}=1$, $\gcd\{12k+6,12k+11\}=1$, and $\gcd\{12k+6,12k+5\}=1$. The third equality is trivial, and the first two equalities follow immediately from the Case 2 assumption.  

\noindent \textbf{Case 3a:} Next, suppose that $5\mid 12k-1$ and $7\nmid 12k-3$.

Use the initial labeling $\ell$ in Equation~\eqref{initiallabel} with the following two altered labels
\begin{align}
\label{case3a}
    \begin{array}{cccc}
    \ell(u_{5k+2})=12k+2,     & \ell(v_{5k+1})=12k+4.
    \end{array}
\end{align}
Notice that since $5$ divides $12k-1$, we have $\gcd\{12k+10,12k+5\}=1$. 
As before, we need only to check that $\ell(u_{5k+2})$ and $\ell(v_{5k+1})$ are relatively prime with the labels of any adjacent vertices.
Clearly, $\gcd\{12k+2,12k+3\}=\gcd\{12k+4,12k+3\}=1$. Since both $12k+2$ and $12k+4$ are not divisible by $3$, we know that $\gcd\{12k+2,12k-1\}=\gcd\{12k+4,12k+1\}=1$. Since 7 is assumed to not divide $12k-3$, $\gcd\{12k+4,12k-3\}=1$. Finally, our assumption of $5\mid 12k-1$ implies $5\nmid 12k+2$, hence $\gcd\{12k+2,12k+7\}=1$.

\noindent \textbf{Case 3b:} We now assume $5\mid 12k-1$ and $7\mid 12k-3$.

We reassign the following labels from $\ell$ 
\begin{align}
\label{case3b}
    \begin{array}{ccccccc}
        \ell(u_{5k+2})=12k+6, && \ell(v_{5k+2})=12k+5, && \ell(v_{5k+3})=12k+3.
    \end{array}
\end{align}
We need only check that these new labels are relatively prime with labels of any neighboring vertices.
It is clear that 
\[
\gcd\{12k+6,12k+7\}=\gcd\{12k+6,12k+5\}=\gcd\{12k+5,12k+3\}=1.
\]
Since $12k+2$ is not divisible by 3, $\gcd\{12k+2,12k+5\}=1$.
By our assumption that $12k-1$ is divisible by 5, $\gcd\{12k+3,12k+8\}=1$. 
Since $12k-3$ is assumed to be divisible by 7, $\gcd\{12k-1,12k+6\}=\gcd\{12k+3,12k+10\}=1$.

\noindent \textbf{Case 4a:} Suppose that $5\mid 12k-3$ and $7\nmid 12k-3$.

We make the following changes to $\ell$ 
\begin{align}
 \label{case4a}
    \begin{array}{cccccccccc}
        \ell(v_{5k+1})=12k+4, && \ell(v_{5k+4})=12k+10, && \ell(u_{5k+2})=12k+8, && \ell(u_{5k+3})=12k+2.
    \end{array}
\end{align}
Clearly we have 
\[
\gcd\{12k+4,12k+3\}=\gcd\{12k+2,12k+1\}=\gcd\{12k+8,12k+7\}=\gcd\{12k+10,12k+9\}=1.
\]
Additionally, since none of the four reassigned labels are divisible by 3, it is clear that 
\begin{align*}
\gcd\{12k+4,12k+1\}&=\gcd\{12k+2,12k+5\}=\gcd\{12k+10,12k+7\}\\
&=\gcd\{12k+8,12k-1\}=\gcd\{12k+2,12k+11\}=1.
\end{align*}
Our assumptions in this case include that $7\nmid 12k-3$ and also imply that $5\nmid 12k+8$ or $12k+10$.  Thus we have
\[
\gcd\{12k+4,12k-3\}=\gcd\{12k+8,12k+3\}=\gcd\{12k+10,12k+5\}=1.
\]

\noindent \textbf{Case 4b:} Now suppose that $5\mid 12k-3$, $7\mid 12k-3$, and $11\nmid 12k-3$.

Once again, we make four changes to $\ell$ in this case
\begin{align}
\label{case4b}
    \begin{array}{cccccccccc}
        \ell(v_{5k})=12k-1, && \ell(u_{5k})=12k-3, && \ell(u_{5k+2})=12k+8, && \ell(v_{5k+4})=12k+4.
    \end{array}
\end{align}
Since the two vertices indexed by $5k$ are in the previous block of ten vertices, it is important to consider whether that block falls within a case in which any labels were swapped from the initial labeling of that block.  Since we assume $5\mid 12k-3$, $5 \mid 12k-13=12(k-1)-1$, whereas $7\mid 12k-3$ implies $7\nmid 12k-15=12(k-1)-3$ so vertices in $U_{k-1}\cup V_{k-1}$ would be labeled according to Case 3a.  Neither vertex whose label was swapped within Case 3a is adjacent to $v_{5k}$ or $u_{5k}$, so the adjacent pairs of labels to consider from that block are $12k-3$ and $12k-2$, $12k-3$ and $12k-1$, and $12k-1$ and $12k-4$.  

Overall, there are ten adjacent pairs of labels that need to be verified as relatively prime.  It is clear that 
\[
\gcd\{12k-3,12k-2\}=\gcd\{12k-3,12k-1\}=\gcd\{12k+8,12k+7\}=\gcd\{12k+4,12k+5\}=1.
\]
Since the only reassigned label that is a multiple of $3$ is $12k-3$, we have 
\[
\gcd\{12k-1,12k-4\}=\gcd\{12k-1,12k+2\}=\gcd\{12k+4,12k+7\}=1.
\]
The assumption $5\mid 12k-3$ implies $5$ is not a factor of $12k+8$ or $12k+4$, hence
\[
\gcd\{12k+8,12k+3\}=\gcd\{12k+4,12k+9\}=1.
\]
Finally, by our assumption that $11\nmid 12k-3$, we know $\gcd\{12k-3,12k+8\}=1$.


\noindent \textbf{Case 4c:} Assume that $5\mid 12k-3$, $7\mid 12k-3$, $11\mid 12k-3$, and $13\nmid 12k-3$.

Five reassignments of labels are needed
\begin{align}
\label{case4c}
    \begin{array}{cccccc}
        \ell(v_{5k})=12k-1, && \ell(u_{5k})=12k-3, && \ell(u_{5k+2})=12k+10, \\
        \ell(u_{5k+3})=12k+8, && \ell(v_{5k+4})=12k+4. &&
    \end{array}
\end{align}
The reason why we can change the labels on $v_{5k}$ and $u_{5k}$ without causing any adjacent pairs of vertices to not be relatively prime is the same reason as given in Case 4a.
There are ten additional pairs of labels that need to be shown to be relatively prime to complete this case.  Clearly,  
\begin{align*}
\gcd\{12k-1,12k+2\} &= \gcd\{12k+4,12k+5\} = \gcd\{12k+10, 12k+7\}\\ 
&= \gcd\{12k+8,12k+5\}=\gcd\{12k+8,12k+11\}\\
&=\gcd\{12k+4,12k+7\} =1.
\end{align*}
By our assumption that $5\mid 12k-3$, we know $5$ is not a factor of $12k+4$, resulting in $\gcd\{12k+4,12k+9\}=1$.  Similarly, $7$ is assumed to be a factor of $12k-3$, so $7\nmid 12k+3$ and $7\nmid 12k+8$; therefore, $\gcd\{12k+10,12k+3\}=\gcd\{12k+8,12k+1\}=1$.  Lastly, we assumed $13\nmid 12k-3$, hence $\gcd\{12k-3,12k+10\}=1$, resulting in the relatively prime condition being satisfied.








\noindent \textbf{Case 4d:} Finally we suppose that $5\mid 12k-3$, $7\mid 12k-3$, $11\mid 12k-3$, and $13\mid 12k-3$.

In this case, we only need one pair of labels to be swapped by relabeling $u_{5k-2}$ as $12k+2$ and $v_{5k+1}$ as $12k-2$.  As in the last two cases, the previous block of vertices that contains $u_{5k-2}$ falls within Case 3a, which involves swapping two labels on vertices that are not adjacent to $u_{5k-2}$.  Its neighbors then are labeled by $12k-11$, $12k-7$, and $12k-1$, while the label $12k-2$ on $v_{5k+1}$ is adjacent to $12k-3$, $12k+1$, and $12k+3$.  
Thus, by our assumptions in this case, our reassigned labels are relatively prime with the labels of adjacent vertices.

Therefore, by our assumptions and case analysis, it is clear that all labels are relatively prime with their adjacent labels.  Thus in each case the updated $\ell$ is a coprime labeling, making $\pr(GP(5m,2))\leq 12m+1$.
Since $\alpha(GP(5m,2))=\left\lfloor\frac{4(5m)}{5}\right\rfloor=4m$, we need $6m$ odd numbers to label the vertices in $GP(5m,2)$. Thus, $\pr(GP(5m,2))\geq 12m-1$. Therefore, $\pr(GP(5m,2))=12m-1$.
%
\end{proof}

\begin{theorem}
The minimum coprime number for $GP(n,2)$ for $n\geq 5$ is given by
$$\pr(GP(n,2))=
\begin{cases}
12m-1 & \text{if } n=5m\\
12m+3 & \text{if } n=5m+1\\
12m+5 & \text{if } n=5m+2\\
12m+7 & \text{if } n=5m+3\\
12m+9 & \text{if } n=5m+4.\\
\end{cases}$$
\end{theorem}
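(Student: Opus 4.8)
The plan is to handle the five residue classes of $n$ modulo $5$ separately, building on the block construction already established in Lemma~\ref{GPn2Lemma} for the case $n=5m$. For each case $n=5m+r$ with $r\in\{1,2,3,4\}$, I would reuse the labeling of the $m$ complete blocks of ten vertices exactly as in Equation~\eqref{initiallabel} (with the same case-by-case alterations based on divisibility of $12k\pm1, 12k-3$, etc.), and then append a \emph{partial block} consisting of the final $r$ values of $v_i$ and $r$ values of $u_i$. The bulk of the work is to produce an explicit labeling of these $2r$ leftover vertices using labels just above $12m-1$, verify the coprimality of all new adjacent pairs (including the wraparound edges $v_n v_1$, $u_{n-1}u_1$, and $u_n u_2$, which now involve the partial block), and confirm the claimed maximum label. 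First I would establish the lower bounds via the independence number: since $\alpha(GP(n,2))=\lfloor 4n/5\rfloor$, the number of odd labels required is $2n-\lfloor 4n/5\rfloor$, and computing $2(5m+r)-\lfloor(4(5m+r))/5\rfloor$ in each case should yield lower bounds matching the stated values (e.g. for $n=5m+1$ one gets $6m+2$ odd numbers needed, forcing the largest label to be at least $12m+3$).

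The upper bound in each case reduces to exhibiting a valid coprime labeling achieving the stated maximum. For the partial-block construction I would proceed case by case. For $r=1$ (so $n=5m+1$), I append a single pair $v_{n}, u_{n}$; the natural choice is to assign them odd labels near the top, say $12m+1$ and $12m+3$, checking that $v_n$ is coprime to both $v_{n-1}$ and $v_1$, that $u_n$ is coprime to its $GP(n,2)$-neighbors $u_{n-2}$ and $u_{n+2}=u_2$, and that the spoke $u_nv_n$ is coprime. For $r=2,3,4$ the partial block has $2,3,4$ spoke edges plus the corresponding cycle and inner edges, and I would choose labels in the windows $[12m, 12m+r\text{-dependent bound}]$ so that the structural observations from the lemma (no two adjacent even labels, no two adjacent multiples of $3$) are preserved, and the only remaining pairs to check by hand are those whose labels differ by $5$ or $9$.

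A technical subtlety I would address explicitly is the wraparound structure: when $k=2$, the inner cycle $u_1\sim u_3\sim u_5\sim\cdots$ splits into parity-dependent orbits, so for small $r$ the final inner vertices $u_{n-1},u_n$ connect back to $u_1,u_2$ (as already noted in the lemma for the $n=5m$ case). Because $n=5m+r$ changes the indices modulo $n$, I would re-derive precisely which early vertices the last few inner vertices attach to, and ensure the appended labels on $u_{n-1},u_n$ remain coprime to $\ell(u_1)=1$ and $\ell(u_2)=4$ (the former being automatic and the latter requiring only oddness, which the top labels provide). I would keep the same reserved-label discipline as in the lemma, leaving $u_{5k+4}, u_{5k+5}, v_{5k+5}$ of the final full block unchanged so they mesh with the partial block.

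The main obstacle I anticipate is the proliferation of divisibility subcases analogous to Cases 2 through 4d of Lemma~\ref{GPn2Lemma}: the partial block introduces new adjacent pairs differing by $5$, and ensuring these are not simultaneously divisible by $5$ (or by $3, 7, 11, 13$ for the shifted labels) may force a separate sub-analysis for each residue of $m$ modulo a small modulus, just as the $n=5m$ proof required conditioning on the divisibility of $12k-1$, $12k-3$, and $12k+5$. In the worst case the leftover labels near $12m+r$ could collide in divisibility with the retained top labels of the last full block, so I would likely need to present explicit small tables (one per residue $r$, possibly refined by $m \bmod 5$) giving concrete label assignments and a short verification that every edge is coprime. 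I would rather verify finitely many label windows directly than attempt a uniform formula, since the payoff—matching the independence-number lower bound exactly—confirms these are genuine minimum coprime labelings.
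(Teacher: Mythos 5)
Your proposal follows essentially the same route as the paper's proof: reuse the Lemma~\ref{GPn2Lemma} labeling on the first $5m$ vertices (exploiting that $v_{5m}$, $u_{5m}$, $u_{5m-1}$ retain their Equation~\eqref{initiallabel} values), append explicit near-top labels to the $2r$ leftover vertices (e.g.\ $12m+1$ and $12m+3$ for $r=1$, exactly as the paper does), split into divisibility subcases where labels differing by $5$ or $7$ could collide (the paper needs this only for $n=5m+4$, conditioning on whether $5$ and $7$ divide $12m+2$), and match the lower bound $2(2n-\lfloor 4n/5\rfloor)-1$ coming from the independence number. The approach and all key ingredients coincide with the paper's argument.
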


\begin{proof}
When $n=5m$, we constructed in Lemma~\ref{GPn2Lemma} a minimum coprime labeling $\ell$ with $12m-1$ as the largest label.  
For the remaining cases, we will build the labeling by using $\ell$ defined in Lemma~\ref{GPn2Lemma} for the first $5m$ vertices in $v_1\ldots v_n$ and the first $5m$ vertices in $u_1\ldots u_n$. Note that the vertices $v_{5m}$, $u_{5m}$, and $u_{5m-1}$ are not changed from the labeling defined in Equation~\eqref{initiallabel}. Hence, $\ell(v_{5m})=12m-3$, $\ell(u_{5m})=12m-1$, and $\ell(u_{5m-1})=12m-5$.

Suppose that $n=5m+1$. Label the remaining vertices as $\ell(u_{5m+1})=12m+3$ and $\ell(v_{5m+1})=12m+1$. 
By Lemma~\ref{GPn2Lemma}, we need only check that the following pairs of adjacent vertices have relatively prime labels: $u_{5m+1} v_{5m+1}$, $u_{5m} u_1$, $u_{5m+1} u_2$, $v_{5m+1} v_1$, $u_{5m-1} u_{5m+1}$, and $v_{5m} v_{5m+1}$.  
It is clear we have each of the following necessary relatively prime pairs 
\begin{align*}
\gcd\{12m-1,1\}&=\gcd\{12m+1,2\}= \gcd\{12m+3,4\}=\gcd\{12m+1,12m+3\}\\
&=\gcd\{12m-5,12m+3\}=\gcd\{12m-3,12m+1\}=1.
\end{align*}
Since the independence number is $\alpha(GP(5m+1,2))=\left\lfloor \frac{4(5m+1)}{5}\right\rfloor=4m$, we have used the maximum number of even labels less than $12m+3$. Since all odd integers were used from 1 to $12m+3$, we have that $\pr(GP(n,2))=12m+3$ in the case of $n=5m+1$. 

Next, suppose that $n=5m+2$.  We label the vertices $v_1,\ldots,v_{5m}$ and $u_1,\ldots,u_{5m}$ as in Lemma~\ref{GPn2Lemma}.
Label the remaining vertices as 
\begin{align*}
    \begin{array}{cccccccccc}
        \ell(u_{5m+1})=12m+4, && \ell(u_{5m+2})=12m+5, && \ell(v_{5m+1})=12m+1, && \ell(v_{5m+2})=12m+3. \\
    \end{array}
\end{align*}
As explained above, $\ell(u_{5m})=12m-1$, $\ell(v_{5m})=12m-3$, and $\ell(u_{5m-1})=12m-5$.
Also note that our new labels on vertices adjacent $v_1$, $u_1$, and $u_2$ make relatively prime pairs since $\ell(u_{5m+2})$ and $\ell(v_{5m+2})$ are odd.
For the remaining adjacent pairs, we have
\begin{align*}
\gcd\{12m+4,12m-5\}&=\gcd\{12m+4,12m+1\}= \gcd\{12m+5,12m-1\}\\ 
&=\gcd\{12m+5,12m+3\}=\gcd\{12m+1,12m-3\}\\
&=\gcd\{12m+1,12m+3\}=1.
\end{align*}
The independence number in this case is $\alpha(GP(5m+2,2))=\left\lfloor\frac{4(5m+2)}{5}\right\rfloor=4m+1$, which shows we used the maximum number of even labels since one of the last four vertex labels is even.  Thus, $\pr(GP(n,2))=12m+5$ when $n=5m+2$.

Assume next that $n=5m+3$.  Again we label the vertices $v_1,\ldots,v_{5m}$ and $u_1,\ldots,u_{5m}$ as in Lemma~\ref{GPn2Lemma}.
Label the remaining vertices as
\begin{align*}
    \begin{array}{ccccccc}
         \ell(u_{5m+1})=12m+4, && \ell(u_{5m+2})=12m+3, && \ell(u_{5m+3})=12m+7, \\
         \ell(v_{5m+1})=12m+1, && \ell(v_{5m+2})=12m+2, && \ell(v_{5m+3})=12m+5. \\
    \end{array}
\end{align*}
Since $\ell(v_{5m+3})$ and $\ell(u_{5m+3})$ are odd, they are relatively prime with $\ell(v_1)$ and $\ell(u_2)$, respectively. The remaining adjacent pairs satisfy the following
\begin{align*}
\gcd\{12m+4,12m-5\}&=\gcd\{12m+4,12m+1\}=\gcd\{12m+4,12m+7\}\\
&=\gcd\{12m+3,12m-1\}=\gcd\{12m+3,12m+2\}\\
&=\gcd\{12m+7,12m+5\}=\gcd\{12m+1,12m-3\}\\
&=\gcd\{12m+1,12m+2\}=\gcd\{12m+2,12m+5\}=1.
\end{align*}
The independence number when $n=5m+3$ is $\alpha(GP(5m+3,2))=\left\lfloor\frac{4(5m+3)}{5}\right\rfloor=4m+2$, implying our use of two even labels on the final six vertices is the maximum allowable. Therefore, $\pr(GP(n,3))=12m+7$ when $n=5m+3$.

Finally, when $n=5m+4$ we need to consider three cases when labeling the final eight vertices.  In each case, the labels on $u_{5m+3}$, $u_{5m+4}$, and $v_{5m+4}$ trivially have no common factors with their respective adjacent vertices $u_1$, $u_2$, and $v_1$.  
First, assume $5\nmid 12m+2$ and label the remaining vertices as
\begin{align}\label{lastcase}
    \begin{array}{cccccc}
         \ell(u_{5m+1})=12m+4, && \ell(u_{5m+2})=12m+8, \\
          \ell(u_{5m+3})=12m+3, && \ell(u_{5m+4})=12m+9, \\
         \ell(v_{5m+1})=12m+1, && \ell(v_{5m+2})=12m+5,  \\
         \ell(v_{5m+3})=12m+2, && \ell(v_{5m+4})=12m+7.
    \end{array}
\end{align}
As before, $v_{5m}$, $u_{5m}$, and $u_{5m-1}$ are all constructed the same as in Equation~\eqref{initiallabel}. 
Thus,
\begin{align*}
\gcd\{12m+4,12m-5\} &= \gcd\{12m+4,12m+3\}=\gcd\{12m+4,12m+1\}\\
&= \gcd\{12m+8,12m-1\}=\gcd\{12m+8,12m+5\}\\
&=\gcd\{12m+8,12m+9\} = \gcd\{12m+3,12m+2\}\\
&=\gcd\{12m+9,12m+7\}=\gcd\{12m+1,12m-3\} \\
&=\gcd\{12m+1,12m+5\}=\gcd\{12m+5,12m+2\}=1.
\end{align*}
Additionally, our final pair satisfies $\gcd\{12m+2,12m+7\}=1$ by our assumption of $5\nmid 12m+2$.

Next we assume $5\mid 12m+2$ and $7\nmid 12m+2$.  We label the eight vertices in the final block as in Equation~\eqref{lastcase} except reassign $\ell(u_{5m+1})=12m+2$ and $\ell(v_{5m+3})=12m+4$.  The following show these two vertices have labels that are relatively prime with their neighbors, where the assumption of $7\nmid 12m+2$ is necessary for the first $\gcd$ calculation:
\begin{align*}
u_{5m+1}&: \gcd\{12m+2,12m-5\}=\gcd\{12m+2,12m+3\}=\gcd\{12m+2,12m+1\}=1\\
v_{5m+3}&: \gcd\{12m+4,12m+5\}=\gcd\{12m+4,12m+3\}=\gcd\{12m+4,12m+7\}=1.
\end{align*}

Finally, we assume $5\mid 12m+2$ and $7\mid 12m+2$.  We label the vertices as follows:
\begin{align*}
    \begin{array}{cccccccccc}
         \ell(u_{5m+1})=12m && \ell(u_{5m+2})=12m+2 && \ell(u_{5m+3})=12m+7 && \ell(u_{5m+4})=12m+1,\\
         \ell(v_{5m+1})=12m+5 && \ell(v_{5m+2})=12m+3 && \ell(v_{5m+3})=12m+4 && \ell(v_{5m+4})=12m+9. \\
    \end{array}
\end{align*}
Since $12m+2$ is divisible by $5$ and $7$, we know that $12m$ is not divisible by $5$ or $7$ and likewise $12m+4$ is not divisible by $5$, resulting in 
$$\gcd\{12m,12m-5\}=\gcd\{12m,12m+5\}=\gcd\{12m,12m+7\}=\gcd\{12m+4,12m+9\}=1.$$  
The remaining adjacent pairs have relatively prime labels based on the following
\begin{align*}
\gcd\{12m+2,12m-1\} &=\gcd\{12m+2,12m+1\}=\gcd\{12m+2,12m+3\}\\
 &= \gcd\{12m+7,12m+4\}= \gcd\{12m+1,12m+9\}\\
&= \gcd\{12m+5,12m-3\}=\gcd\{12m+5,12m+3\}\\
&= \gcd\{12m+3,12m+4\}=1.
\end{align*}
In all three cases for the final eight vertices, our relatively prime condition is true while using three even labels in this block.  Since the independence number when $n=5m+4$ is $\alpha(GP(5m+4,2))=\left\lfloor\frac{4(5m+4)}{5}\right\rfloor=4m+3$, we have shown $\pr(GP(n,2))=12m+9$ when $n=5m+4$, concluding our fifth and final case of $n\pmod{5}$.
\end{proof}

We conclude this section with conjectures for the minimum coprime number of $GP(n,k)$ for larger cases of $k$. The independence numbers, as given in~\cite{BHM} and~\cite{FGS}, for the case of $k=3$ when $n$ is odd is  $\alpha(GP(n,3))=n-2$, and when $n=3k$ is $\alpha(GP(3k,k))=\left\lceil \frac{5k-2}{2}\right\rceil$.  These values lead to the following conjectures, which we have  verified for small values of $n$, although a minimum coprime labeling in each general case still alludes us.

\begin{conjecture}
For $n \geq 7$, the minimum coprime number for $GP(n,3)$ if $n$ is odd is
$$\pr(GP(n,3))=2n+3.$$
\end{conjecture}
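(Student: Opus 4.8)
The plan is to prove the two matching bounds $\pr(GP(n,3)) \geq 2n+3$ and $\pr(GP(n,3)) \leq 2n+3$ separately, with essentially all of the difficulty residing in the construction for the upper bound.

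For the lower bound I would argue exactly as in the $GP(n,2)$ analyses: since two even integers are never relatively prime, every vertex receiving an even label must lie in a common independent set, so at most $\alpha(GP(n,3)) = n-2$ of the $2n$ vertices can be labeled by even numbers. The remaining $n+2$ vertices then require distinct odd labels, and the $(n+2)$-th smallest odd integer is $2n+3$; hence any coprime labeling has largest label at least $2n+3$. This also pins down the target labeling: to meet the bound it must use every odd integer in $[1,2n+3]$ together with exactly $n-2$ of the $n+1$ available even integers, leaving three even labels unused and placing the used even labels on a maximum independent set.

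For the upper bound I would build an explicit labeling in the spirit of Theorem~\ref{conj} and Lemma~\ref{GPn2Lemma}. Label the outer cycle consecutively by $\ell(v_i) = i$, so that all outer edges $v_i v_{i+1}$ and the wrap-around edge $v_1 v_n$ (which carries the label $1$) are automatically coprime. The inner vertices $u_1, \ldots, u_n$ then receive the remaining labels from $[n+1, 2n+3]$ with three even values omitted, and two families of adjacencies remain to be controlled: the spokes $v_i u_i$ and the inner step-three edges $u_i u_{i+3}$. Following the prism constructions, I would arrange the inner labels in one or two consecutive runs so that each spoke sum $\ell(u_i) + \ell(v_i)$ takes one of only a few fixed values; forcing those sums to be prime (as in Theorem~\ref{conj}, whose hypothesis is supplied for even orders by Conjecture 2.1 of~\cite{SSCEHRW}) makes every spoke coprime by Observation~\ref{easy}, while consecutive runs keep most $u_i u_{i+3}$ pairs coprime since their labels differ by the small constant $3$.

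The main obstacle---and the reason this remains a conjecture rather than a theorem---is the interaction of the inner step-three edges with the spoke constraint. First one must split into the cases $3 \nmid n$, where the edges $u_i \sim u_{i+3}$ form a single $n$-cycle, and $3 \mid n$, where they form three odd cycles of length $n/3$; these give genuinely different inner structures and demand separate layouts. Worse, within a consecutive-run layout the pairs $u_i u_{i+3}$ have labels differing by $3$, so they fail to be coprime precisely when both are multiples of $3$, and the boundary where the spoke sum switches value produces further pairs whose labels differ by other small amounts. Repairing these local failures forces an expanding case analysis on the divisibility of the relevant labels by $3, 5, 7, 11, \ldots$, exactly mirroring Cases 4a--4d in the proof of Lemma~\ref{GPn2Lemma}. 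I expect no a priori bound on the primes that must be excluded without assuming a prime-pair statement of the type in~\cite{SSCEHRW}, which is why a uniform closed-form proof valid for all odd $n$ appears out of reach by these elementary methods.
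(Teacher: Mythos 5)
The statement you were asked to prove is not a theorem in the paper at all---it is stated there as a conjecture, supported only by the independence-number computation $\alpha(GP(n,3))=n-2$ (cited from the literature) and by verification for small $n$; the authors explicitly say a general minimum coprime labeling ``still alludes'' them. So there is no paper proof to compare against, and your proposal should be judged on whether it closes the conjecture. It does not, and you say so yourself.

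What you do have is correct and matches the paper's implicit reasoning exactly: since even labels must sit on an independent set, at most $n-2$ vertices can receive even labels, so at least $n+2$ vertices need distinct odd labels, and the $(n+2)$-th odd positive integer is $2n+3$, giving $\pr(GP(n,3))\geq 2n+3$. This is precisely how the paper arrives at the conjectured value. The genuine gap is the upper bound: your sketch (consecutive outer labels, inner labels arranged in runs with prime spoke sums, case repairs for the step-three edges) is a plausible program in the spirit of Theorem~\ref{conj} and Lemma~\ref{GPn2Lemma}, but it is never executed---no explicit labeling is given, the $3\mid n$ versus $3\nmid n$ structural split is only named, and the ``expanding case analysis'' is acknowledged to be unbounded without a prime-pair hypothesis of the type in~\cite{SSCEHRW}. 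One concrete obstruction worth noting: with $\ell(v_i)=i$ on the outer cycle, a single consecutive run on the inner vertices cannot make all spoke sums equal (the sums would have to be constant while the labels are forced to be distinct and increasing in the same direction), which is exactly why Theorem~\ref{conj} uses two runs and two prime sums; for $GP(n,3)$ the step-three inner edges then cut across the boundary between the two runs in three places rather than one, and this is where your sketch stops. In short: lower bound correct and identical to the paper's; upper bound absent, consistent with the statement's status as an open conjecture.
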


\begin{conjecture}
For $k\geq 2$, the minimum coprime number for $GP(3k,k)$ is given by
$$\pr(GP(3k,k))=
\begin{cases}
7k & \text{if } k \text{ is odd} \\
7k+1 & \text{if } k \text{ is even}.\\
\end{cases}$$
\end{conjecture}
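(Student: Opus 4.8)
The plan is to prove the lower bound from the independence number and to establish the matching upper bound by an explicit periodic construction, mirroring the strategy used for $GP(5m,2)$ in Lemma~\ref{GPn2Lemma}.

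\textbf{Lower bound.} Since $GP(3k,k)$ has $6k$ vertices and the even-labeled vertices of any coprime labeling form an independent set, at most $\alpha(GP(3k,k))=\lceil\frac{5k-2}{2}\rceil$ vertices may receive even labels, so at least $6k-\lceil\frac{5k-2}{2}\rceil$ vertices must receive odd labels. For odd $k$ this count is $\frac{7k+1}{2}$ and for even $k$ it is $\frac{7k+2}{2}$. A labeling set $\{1,\dots,m\}$ contains only $\lceil m/2\rceil$ odd integers, and requiring $\lceil m/2\rceil$ to meet these counts forces $m\geq 7k$ when $k$ is odd and $m\geq 7k+1$ when $k$ is even, matching the conjectured values. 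Moreover, in each case the number of available odd labels equals the required number of odd-labeled vertices \emph{exactly}; hence any minimum coprime labeling must use every odd label in the set and must place even labels on a maximum independent set. I would record this as a separate observation, since it sharply constrains the construction.

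\textbf{Upper bound (construction).} First decompose the graph: the $v_i$ form an outer cycle $C_{3k}$, the $u_i$ split into $k$ disjoint triangles $\{u_i,u_{i+k},u_{i+2k}\}$ for $1\le i\le k$, and the spokes $v_iu_i$ join the two. I would label the outer cycle essentially consecutively so that almost all outer edges carry labels differing by $1$ and are coprime for free, then distribute the remaining odd labels across the inner triangles, placing exactly one even label per triangle — which is forced, since each triangle is a clique and so contains at most one independent vertex. As in the $GP(5m,2)$ argument, the labeling would be specified by a fixed base block together with a repeating block of indices, with the wraparound edges (indices mod $3k$) and the interface between consecutive blocks verified separately. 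The two parities of $k$ would be handled in separate streams, since the parity of $k$ changes both the target value $m$ and the parity pattern of the even labels actually available.

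\textbf{Main obstacle.} The crux, and the step I expect to be hardest, is pairwise coprimality within each inner triangle under the tight label budget. Unlike the outer edges, whose labels can be forced to be consecutive, the three labels on $u_i,u_{i+k},u_{i+2k}$ are mutually adjacent, so any local repair must simultaneously preserve all three pairwise gcds together with the gcd against the spoke neighbor. Exactly as differences of $5$ forced the divisibility cases in Lemma~\ref{GPn2Lemma}, here certain linear forms in $k$ can share a small prime factor, and I would resolve these by a finite case analysis on the divisibility of those forms by $2,3,5,7,11,\dots$, patched by local swaps of labels between nearby vertices. The two genuine difficulties are (i) proving that the set of small primes one must patch is bounded independently of $k$, so that the case analysis terminates, and (ii) verifying that a maximum independent set of size $\lceil\frac{5k-2}{2}\rceil$ can be chosen consistently with such a periodic pattern, including the residue classes of $k$ modulo a small modulus where the base block must be adjusted. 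Establishing (i) and (ii) uniformly in $k$ is precisely what keeps this statement a conjecture rather than a theorem.
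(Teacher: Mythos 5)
The statement you are attempting is a conjecture in the paper, not a theorem: the authors state explicitly that they have only verified it for small values and that a minimum coprime labeling in the general case still eludes them. The only supporting material the paper supplies is the independence number $\alpha(GP(3k,k))=\left\lceil \frac{5k-2}{2}\right\rceil$, cited from the literature, and that is precisely the source of your lower bound. That half of your proposal is correct and complete: with $6k$ vertices, at least $6k-\left\lceil \frac{5k-2}{2}\right\rceil$ vertices need odd labels, which equals $\frac{7k+1}{2}$ for odd $k$ and $\frac{7k+2}{2}$ for even $k$, and since $\{1,\ldots,m\}$ contains only $\lceil m/2\rceil$ odd integers this forces $m\geq 7k$ and $m\geq 7k+1$ respectively. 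Your further observation that these counts are tight, so that any minimum coprime labeling must use every odd label and must place the even labels on a maximum independent set, is also correct and is a useful sharpening that the paper does not record; so is your structural remark that the inner vertices of $GP(3k,k)$ decompose into $k$ disjoint triangles, each of which can host at most one even label.

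The gap is the upper bound, and it is a genuine one: your ``construction'' is a plan rather than a proof. You propose a periodic block labeling in the style of Lemma~\ref{GPn2Lemma}, but no explicit labeling is exhibited, no block is written down, and no case analysis is carried out; indeed you candidly identify the two obstructions --- termination of the small-prime patching argument uniformly in $k$, and compatibility of a maximum independent set of size $\left\lceil \frac{5k-2}{2}\right\rceil$ with a periodic pattern --- as unresolved. Consequently nothing in the proposal establishes $\pr(GP(3k,k))\leq 7k$ for odd $k$ or $\pr(GP(3k,k))\leq 7k+1$ for even $k$. To be fair, this is exactly the gap the paper itself has: you have essentially reconstructed the authors' reasoning for why these are the conjectured values and why the construction is hard. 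But the statement remains unproven under your proposal, and the write-up should present the upper bound as open rather than as a step of a proof.
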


\section{Stacked Prisms}

We next turn our focus to the class of graphs known as the \textit{stacked prism}, also known as the generalized prism graph.  A stacked prism is defined as $Y_{m,n}=C_m\square P_n$ for $m\geq 3$ and $n\geq 1$. See Figure~\ref{fig:y36} for an example of $Y_{3,6}$ with a minimum coprime labeling. 

\begin{figure}[htb]
    \centering
    \includegraphics{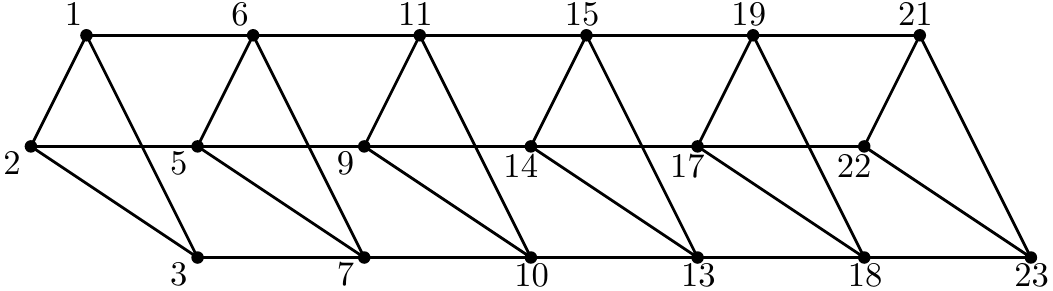}
    \caption{Example of a minimum coprime labeling of $Y_{3,6}$}
    \label{fig:y36}
\end{figure}

We first focus on the stacked triangular prism, $Y_{3,n}$, which has $3n$ vertices. Its independence number is $n$ since an independent set can contain at most one vertex from each triangle, and it is trivial to find such a set of $n$ vertices.  We demonstrate in the following result a way to apply a minimum coprime labeling based on how this independence number limits our use of even labels.

\begin{theorem}
The minimum coprime number for the stacked triangular prism is given by
$$\pr(Y_{3,n})=4n-1.$$
\end{theorem}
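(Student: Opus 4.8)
The plan is to establish the lower bound first, then construct an explicit labeling achieving it. For the lower bound, I would use the independence-number argument that pervades this paper: the graph $Y_{3,n}$ has $3n$ vertices with $\alpha(Y_{3,n})=n$. Since all even labels must be placed on an independent set (no two even numbers can be adjacent), at most $n$ of the $3n$ vertices can receive even labels. Hence at least $2n$ vertices must receive odd labels. The number of odd integers in $\{1,\ldots,m\}$ is $\lceil m/2\rceil$, so we need $\lceil m/2 \rceil \geq 2n$, forcing $m \geq 4n-1$. This gives $\pr(Y_{3,n}) \geq 4n-1$, matching the target.

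For the upper bound, I would construct a coprime labeling using exactly the labels $\{1,\ldots,4n-1\}$, which contains precisely $2n$ odd numbers and $2n-1$ even numbers. The structural constraint is that each triangle $\{v,w,x\}$ in a layer is a $3$-clique, so its three labels must be pairwise coprime, and in particular at most one label per triangle can be even; with $n$ triangles and $2n-1$ even labels available, every triangle should carry exactly one even label except one triangle which carries none (or the counts must be balanced so that each triangle gets at most one even label and the $2n-1$ evens are distributed across $n$ triangles). The natural approach, following the block-based constructions earlier in the paper (compare the $GP(5m,2)$ labeling), is to label layer by layer, assigning to each triangle a consecutive or near-consecutive block of three or four integers so that within-triangle coprimality comes from consecutiveness and the vertical edges between adjacent layers connect labels whose difference is small and controlled.

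The key steps, in order, would be: (1) prove the lower bound as above; (2) define an explicit assignment of $\{1,\ldots,4n-1\}$ to the $3n$ vertices, most likely by placing one even and two odd labels (or a controlled pattern) in each triangular layer, and arranging consecutive integers within layers so triangle edges are automatically coprime via Observation~\ref{easy}; (3) verify the three horizontal (triangle) edges in each layer are coprime; (4) verify the three vertical edges $C_m \square P_n$ places between each consecutive pair of layers are coprime, which is where differences between the block in layer $i$ and layer $i+1$ must be shown to be prime or to avoid common factors. I expect step (4) — the vertical inter-layer edges — to be the main obstacle, since unlike the within-triangle edges these will not generally be consecutive integers, and controlling their gcd will likely require a periodic labeling pattern (perhaps with period depending on $n \bmod$ some small number) together with case analysis on divisibility by $3$ and $5$, exactly as in the $GP(n,2)$ proof. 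A clean periodic scheme that keeps all vertical differences equal to $\pm 1$, $\pm 2$, or a fixed prime would sidestep most of the casework, so finding such a pattern is the crux of the construction.
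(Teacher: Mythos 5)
Your lower bound is complete and is exactly the paper's argument: each triangular layer is a clique, so $\alpha(Y_{3,n})=n$, at most $n$ vertices can carry even labels, hence at least $2n$ odd labels are needed, and $\lceil m/2\rceil\geq 2n$ forces $m\geq 4n-1$. (One small slip in your accounting: since exactly $2n$ odd and $n$ even labels are used on the $3n$ vertices, \emph{every} triangle carries exactly one even label; there is no triangle with none.) The gap is the upper bound: you describe the shape of a construction --- layer by layer, near-consecutive triples, casework on divisibility by $3$ and $5$ for the vertical edges --- but you never produce the labeling, and you explicitly defer what you yourself call the crux. That deferred step is the entire substance of the paper's proof: given layer $i$ labeled $4i-3,4i-2,4i-1$, the paper assigns $4i+1,4i+2,4i+3$ to layer $i+1$ in one of several permutations chosen according to whether $5\mid 4i-3$, $3\mid 4i-2$, $3\mid 4i-1$, or $5\mid 4i-2$, and in each case checks that the three vertical differences (which come out to $3$, $4$, or $5$) give coprime pairs.

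Moreover, the clean uniform scheme you hope for --- every layer a consecutive triple with all vertical differences equal to $\pm 1$, $\pm 2$, or a fixed prime --- provably cannot be carried through. When $4i+2\equiv 0\pmod{15}$ (equivalently, $5\mid 4i-3$ and $3\mid 4i-1$), the even label $4i+2$ cannot be placed \emph{anywhere} in layer $i+1$: it shares the factor $5$ with $4i-3$, the factor $3$ with $4i-1$, and parity with $4i-2$, yet every position in layer $i+1$ has a vertical neighbor in layer $i$. So the consecutive-triple pattern must be broken. The paper's Case 4 handles exactly this obstruction: it discards $4i+2$, labels layer $i+1$ with $4i,4i+1,4i+3$ (checking $\gcd\{4i,4i+3\}=1$ via $3\mid 4i-1$), and assigns two layers at once so that the recursion resumes with consecutive triples afterwards; the largest label is still $4(n-1)+3=4n-1$. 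Your outline points in the right direction and matches the paper's strategy in spirit, but without this exception mechanism and the explicit permutation rules, the proof is not there.
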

\begin{proof}
We refer to the vertices of $Y_{3,n}$ as $v_{i,j}$ where $i=1,\ldots, n$ and $j=1,2,3$.
Then the edges of the graph are of the form $v_{i,j}\sim v_{i+1,j}$ and $v_{i,j}\sim v_{i,k}$ for $j\neq k$.  We form a coprime labeling $\ell$ recursively by labeling the vertices on the $(i+1)^{\rm st}$ triangle with $\ell(v_{i+1,r}), \ell(v_{i+1,s}), \ell(v_{i+1,t})$ based on the labels chosen for the $i^{\rm th}$ triangle, $\ell(v_{i,r}), \ell(v_{i,s}), \ell(v_{i,t})$.  
First assign the labels $\ell(v_{1,1})=1$, $\ell(v_{1,2})=2$, and $\ell(v_{1,3})=3$.  Each subsequent $(i+1)^{\rm st}$ triangle for $1\leq i\leq n-1$ will use the labels $4i+1$, $4i+2$, and $4i+3$ in some order (or $4i$, $4i+1$, and $4i+3$ in one case) depending on which labels from the $i$th triangle are multiples of 3 and/or 5.  In each case we assume $\ell(v_{i,r})=4i-3$, $\ell(v_{i,s})=4i-2$, and $\ell(v_{i,t})=4i-1$.

\noindent \textbf{Case 1:}  Suppose $5\nmid 4i-3$ and $3\nmid 4i-2$.  In this case, we assign $\ell(v_{i+1,r})=4i+2$, $\ell(v_{i+1,s})=4i+1$, and $\ell(v_{i+1,t})=4i+3$.  The three edges within the $(i+1)^{\rm st}$ triangle have relatively prime labels since they are either consecutive integers or consecutive odd integers.  Since we assumed $5\nmid 4i-3$ and $3\nmid 4i-2$, the edges between the $i^{\rm th}$ and $(i+1)^{\rm st}$ triangle satisfy the following:
$$\gcd\{4i-3,4i+2\}=\gcd\{4i-2,4i+1\}=\gcd\{4i-1,4i+3\}=1.$$

\noindent \textbf{Case 2:} Assume $5\mid 4i-3$ and $3\nmid 4i-1$, which implies $5\nmid 4i-2$.  We then assign $\ell(v_{i+1,r})=4i+1$, $\ell(v_{i+1,s})=4i+3$, and $\ell(v_{i+1,t})=4i+2$.  Again the vertices within the newly labeled triangle have pairs of relatively prime labels.  Since $5\nmid 4i-2$ and $3\nmid 4i-1$, the edges between the two triangles satisfy the following:
$$\gcd\{4i-3,4i+1\}=\gcd\{4i-2,4i+3\}=\gcd\{4i-1,4i+2\}=1.$$

\noindent \textbf{Case 3:} Next suppose $5\nmid 4i-3$ and $5\nmid 4i-2$, in which we assign $\ell(v_{i+1,r})=4i+2$, $\ell(v_{i+1,s})=4i+3$, and $\ell(v_{i+1,t})=4i+1$.  As in previous cases, we only need to verify the edges between the $i^{\rm th}$ and $(i+1)^{\rm st}$ triangles have relatively prime labels on their endpoints, which is satisfied since our assumptions of $5\nmid 4i-3$ and $5\nmid 4i-2$ result in
$$\gcd\{4i-3,4i+2\}=\gcd\{4i-2,4i+3\}=\gcd\{4i-1,4i+1\}=1.$$

\noindent \textbf{Case 4:} There are two remaining other possible assumptions that can be made about factors of 3 or 5 that would allow all cases to be covered: either $5\mid 4i-3$ and $3\mid 4i-1$, or $5\nmid 4i-3$ and both $3$ and $5$ divide $4i-2$. We handle these by combining them into one final case. This is because if $5\mid 4i-3$ and $3\mid 4i-1$, then this implies $5\nmid 4i+1$ and $3,5\mid 4i+2$.  Therefore, having the first of these possible cases implies the second occurs on the next triangle, so we assign labels to the $(i+1)^{\rm st}$ and $(i+2)^{\rm nd}$ triangles at once while assuming $5\mid 4i-3$ and $3\mid 4i-1$.  We set $\ell(v_{i+1,r})=4i$, $\ell(v_{i+1,s})=4i+1$, and $\ell(v_{i+1,t})=4i+3$, as well as $\ell(v_{i+2,r})=4i+5$, $\ell(v_{i+2,s})=4i+6$, and $\ell(v_{i+2,t})=4i+7$.  Note that the labels on one of the edges of the $(i+1)^{\rm st}$ do not trivially satisfy the relatively prime condition.  However, $\gcd\{4i,4i+3\}=1$ since the assumption of $3\mid 4i-1$ implies $3\nmid 4i$, so the condition is satisfied nonetheless.  Also note that while the $(i+1)^{\rm st}$ triangle does not use the three consecutive labels that have been used in other cases, the $(i+2)^{\rm nd}$ triangle does use the three consecutive values that allow us to continue with our recursion to find the $(i+3)^{\rm rd}$ triangle next.

We now verify the six edges from the $i^{\rm th}$ triangle to the $(i+1)^{\rm st}$ and $(i+1)^{\rm st}$ triangle to the $(i+2)^{\rm nd}$ have relatively prime endpoints. It is clear that $\gcd\{4i-1, 4i+3\}=\gcd\{4i+3, 4i+7\}=1$.  Since we assumed $3\mid 4i-1$, we know $3\nmid 4i-2$ or $4i$, hence $\gcd\{4i-2,4i+1\}=\gcd\{4i-3,4i\}=1$.  Our assumption of $5\mid 4i-3$ implies $5\nmid 4i$ or $4i+1$, thus resulting in $\gcd\{4i,4i+5\}=\gcd\{4i+1,4i+6\}=1$. 

In all four cases, we have verified that each pair of adjacent vertices is labeled by relatively prime integers, thus resulting in a coprime labeling once our recursion approach reaches $i=n-1$.  The final triangle will use $4(n-1)+3=4n-1$ as its largest label.  Thus, since we used the maximum amount of $n$ even labels based on the independence number and the smallest possible odd labels, we have proven $\pr(Y_{3,n})=4n-1$.
\end{proof}

We next investigate a minimum coprime labeling of the stacked pentagonal prism, $Y_{5,n}$, which has~$5n$ vertices.  Similarly to $Y_{3,n}$, its independence number is determined based on at most $2$ independent vertices being on each pentagon.  Thus, the independence number is $\alpha(Y_{5,n})=2n$, which leads to the following result on the minimum coprime number of the stacked pentagonal prism. See Figure~\ref{fig:y56} for an example of $Y_{5,6}$ with a minimum coprime labeling. 

\begin{figure}[htb]
    \centering
    \includegraphics{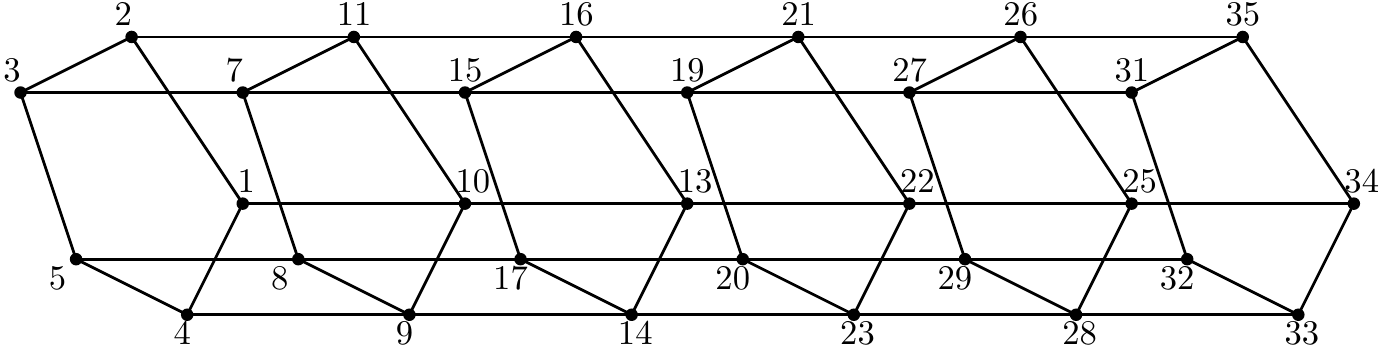}
    \caption{Example of a minimum coprime labeling of $Y_{5,6}$}
    \label{fig:y56}
\end{figure}

\begin{theorem}
The minimum coprime number for the stacked pentagonal prism graph is
$$\pr(Y_{5,n})=6n-1.$$
\end{theorem}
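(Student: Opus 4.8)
The plan is to establish matching lower and upper bounds. For the lower bound I would argue exactly as in the triangular case: every even label must occupy an independent set, so at most $\alpha(Y_{5,n})=2n$ of the $5n$ vertices can receive even labels, and hence at least $3n$ vertices must receive odd labels. The smallest $3n$ odd positive integers are $1,3,\ldots,6n-1$, so any coprime labeling has largest label at least $6n-1$, giving $\pr(Y_{5,n})\ge 6n-1$.

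For the upper bound I would produce an explicit labeling using only the integers $1,\ldots,6n-1$, following the recursive block strategy of the $Y_{3,n}$ proof. Writing the vertices as $v_{i,j}$ with $i=1,\ldots,n$ indexing the pentagons and $j\in\{1,\ldots,5\}$ indexing the $5$-cycle, I would assign to the $i$-th pentagon five labels drawn from the window of six consecutive integers $\{6i-5,\ldots,6i\}$, using all three odd values $6i-5,6i-3,6i-1$ together with two of the three even values $6i-4,6i-2,6i$ (the final pentagon simply omits $6i=6n$). Across all pentagons this uses exactly $2n$ even labels, forced to equal $\alpha(Y_{5,n})=2n$, and the two even labels in each pentagon would be placed on nonadjacent positions of the $C_5$. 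Crucially, since a matching edge $v_{i,j}\sim v_{i+1,j}$ joins equal positions, consecutive pentagons must place their even labels on distinct positions, so that the global set of even-labeled vertices is independent; the placement within each pentagon must then also make the three internal odd edges coprime and the five crossing edges coprime.

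The heart of the argument is the recursive case analysis selecting the placement on the $(i+1)$-st pentagon from that on the $i$-th, exactly as Cases 1--4 do for $Y_{3,n}$ but with more branches. Since all labels in a window lie within $6$ of one another and matching edges join windows whose labels differ by at most $11$, the only primes that can spoil coprimality are $2,3,5,7$ (and occasionally $11$). I would organize the cases by the residue of $i$ modulo $5$ to control factors of $5$, with subcases governed by divisibility of the relevant labels by $3$ and $7$, verifying in each case the five internal and five crossing edges directly via Observation~\ref{easy}. As in the triangular proof, I anticipate that certain unfavorable divisibility patterns cannot be resolved one pentagon at a time and must be handled by labeling two consecutive pentagons simultaneously, arranging the later pentagon so that three consecutive values are available and the recursion can continue; one then checks the final pentagon closes up with largest label $6n-1$.

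The main obstacle is precisely this interaction between the odd-cycle constraint and the matching. Unlike a triangle, a pentagon has five internal edges and admits only two even labels, which must sit on nonadjacent vertices, while each of its five vertices is simultaneously joined by a matching edge to the next pentagon. The odd label $6i-3$ is always a multiple of $3$ and so constrains both its internal neighbors, and the even positions must rotate from pentagon to pentagon to keep the crossing edges free of a common factor of $2$. Arranging all five labels so that no internal or crossing edge shares a factor of $2,3,5,$ or $7$, uniformly across every residue of $i$, and confirming that the two-pentagon look-ahead cases splice correctly back into the recursion, is where essentially all of the difficulty and case checking resides.
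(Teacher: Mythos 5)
Your lower bound is correct and matches the paper's: $\alpha(Y_{5,n})=2n$ caps the number of even labels at $2n$, so at least $3n$ vertices get odd labels and $\pr(Y_{5,n})\ge 6n-1$. The gap is in the upper bound, and it is a genuine one: what you have written is a strategy, not a proof. The entire burden of the argument lies in showing that a local recursive rule (choosing the placement on pentagon $i+1$ from that on pentagon $i$, with occasional two-pentagon look-ahead) \emph{always} succeeds, in every configuration of divisibility of the relevant labels by $3$, $5$, $7$ (and $11$), and you explicitly defer exactly that step (``I anticipate\dots'', ``is where essentially all of the difficulty and case checking resides'') without exhibiting the placement rules or verifying that the branches splice back into the recursion. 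Unlike $Y_{3,n}$, where four cases suffice, there is no evidence that a bounded look-ahead works for pentagons; the repairs actually needed turn out to be quite irregular. A further self-inflicted complication: by allowing the label $6i$ on pentagon $i$, adjacent labels across a matching edge can differ by $11$, so $11$ genuinely enters your case analysis, whereas it can be excluded entirely by restricting each pentagon to the five labels $6i-5,\dots,6i-1$.

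The idea your proposal is missing --- and the way the paper escapes the unbounded case analysis --- is periodicity. The paper writes down an explicit coprime labeling of the first $70$ pentagons (a base pattern alternating between odd and even $i$, with $46$ ad hoc label reassignments listed in a table), observes that every pair of adjacent labels differs by at most $10$, and then defines $\ell(v_{i,j})=\ell(v_{i-70,j})+420$ for $i>70$. Any common divisor of two adjacent labels divides their difference, hence is at most $10$; and since $420=2^2\cdot 3\cdot 5\cdot 7$, a common factor of $2,3,5$ or $7$ after the shift would already have been a common factor before the shift. So coprimality is inherited automatically, and only the five seam edges $v_{70m,j}v_{70m+1,j}$ require a separate check. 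This reduces the theorem to a finite, explicitly completed verification. To make your approach into a proof you would either have to actually carry out your full recursion and prove it covers all divisibility configurations (including the two-pentagon cases closing up correctly, and the terminal pentagon ending at $6n-1$), or else import a finiteness mechanism of this periodic kind.
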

\begin{proof}
We will call $v_{i,j}$ the vertices of $Y_{5,n}$ where $i=1,\ldots, n$ and $j=1,2,3,4,5$.  We initially assign a labeling $\ell$ for $i=1,\ldots, 70$ as follows if $i$ is odd
$$\ell(v_{i,1})=6i-3,\; \ell(v_{i,2})=6i-4,\; \ell(v_{i,3})=6i-5,\; \ell(v_{i,4})=6i-2,\; \ell(v_{i,5})=6i-1,$$
and using the following if $i$ is even
$$\ell(v_{i,1})=6i-5,\;\ell(v_{i,2})=6i-1,\;\ell(v_{i,3})=6i-2, \;\ell(v_{i,4})=6i-3,\;\ell(v_{i,5})=6i-4.$$
Since the labeling differs on odd- and even-indexed pentagons, there are twenty types of adjacent pairs to consider as having relatively labels. If $i$ is odd and $k$ is even, the edges $v_{i,1}v_{i,2}$, $v_{i,2}v_{i,3}$, $v_{i,4}v_{i,5}$, $v_{k,2}v_{k,3}$, $v_{k,3}v_{k,4}$, $v_{k,4}v_{k,5}$, and $v_{k,5}v_{k,1}$ are labeled by consecutive integers.  The edges $v_{i,5}v_{i,1}$, $v_{i,1}v_{i+1,1}$, $v_{k,1}v_{k,2}$, and $v_{k,1}v_{k+1,1}$ are labeled by odd integers that have a difference of $2$, $4$, or $8$.  Edges of the form $v_{i,3}v_{i,4}$, $v_{i,2}v_{i+1,2}$, $v_{i,3}v_{i+1,3}$, $v_{i,5}v_{i+1,5}$, $v_{k,2}v_{k+1,2}$, $v_{k,3}v_{k+1,3}$, or $v_{k,5}v_{k+1,5}$ have labels that differ by 3 or 9, in which these labels are not multiples of 3.  At this point, we have shown that the vertices on eighteen edges are relatively prime. 

The edges $v_{i,4}v_{i+1,4}$ and $v_{k,4}v_{k+1,4}$ have labels that differ by 5 and 7, respectively, and hence may not be relatively prime for certain $i$ and $k$.  Rather than detail numerous cases of alterations that need to be made to the labeling to fix these, we instead list the reassigned labels of 46 vertices from the 350 labels in which $i=1,\ldots, 70$, as seen in Table~\ref{first350}.  The labels in bold are the 46 that were reassigned to avoid adjacent labels sharing multiples of 5 and 7.

\begin{table}[htb]
    \centering
    \begin{tabular}{|c|c|c|c|c|}
    \hline
3	&	2	&	1	&	4	&	5\\
\hline
7	&	11	&	10	&	9	&	8\\
\hline
15	&	\textbf{16}	&	13	&	\textbf{14}	&	17\\
\hline
19	&	\textbf{21}	&	22	&	\textbf{23}	&	20\\
\hline
27	&	26	&	25	&	28	&	29\\
\hline
31	&	35	&	34	&	33	&	32\\
\hline
39	&	38	&	37	&	40	&	41\\
\hline
43	&	\textbf{45}	&	46	&	\textbf{47}	&	44\\
\hline
51	&	\textbf{52}	&	49	&	\textbf{50}	&	53\\
\hline
55	&	59	&	58	&	57	&	56\\
\hline
63	&	62	&	61	&	64	&	65\\
\hline
67	&	71	&	70	&	69	&	68\\
\hline
75	&	74	&	73	&	76	&	77\\
\hline
79	&	83	&	82	&	81	&	80\\
\hline
87	&	86	&	85	&	88	&	89\\
\hline
91	&	95	&	94	&	93	&	92\\
\hline
\textbf{101}	&	98	&	97	&	100	&	\textbf{99}\\
\hline
\textbf{105}	&	107	&	106	&	\textbf{103}	&	104\\
\hline
\textbf{113}	&	110	&	109	&	112	&	\textbf{111}\\
\hline
115	&	119	&	118	&	117	&	116\\
\hline
123	&	122	&	121	&	124	&	125\\
\hline
127	&	131	&	130	&	129	&	128\\
\hline
135	&	134	&	133	&	136	&	137\\
\hline
    \end{tabular}
    \hfill
    \begin{tabular}{|c|c|c|c|c|}
    \hline   
139	&	143	&	142	&	141	&	140\\
\hline
147	&	146	&	145	&	148	&	149\\
\hline
151	&	\textbf{153}	&	154	&	\textbf{155}	&	152\\
\hline
159	&	\textbf{160}	&	157	&	\textbf{158}	&	161\\
\hline
163	&	167	&	166	&	165	&	164\\
\hline
171	&	170	&	169	&	172	&	173\\
\hline
175	&	179	&	178	&	177	&	176\\
\hline
183	&	\textbf{184}	&	181	&	\textbf{182}	&	185\\
\hline
187	&	\textbf{189}	&	190	&	\textbf{191}	&	188\\
\hline
195	&	194	&	193	&	196	&	197\\
\hline
199	&	203	&	202	&	201	&	200\\
\hline
207	&	206	&	205	&	208	&	209\\
\hline
211	&	\textbf{213}	&	214	&	\textbf{215}	&	212\\
\hline
219	&	\textbf{220}	&	217	&	\textbf{218}	&	221\\
\hline
223	&	227	&	226	&	225	&	224\\
\hline
231	&	230	&	229	&	232	&	233\\
\hline
235	&	239	&	238	&	237	&	236\\
\hline
243	&	242	&	241	&	244	&	245\\
\hline
247	&	251	&	250	&	249	&	248\\
\hline
255	&	254	&	253	&	256	&	257\\
\hline
259	&	263	&	262	&	261	&	260\\
\hline
267	&	\textbf{268}	&	265	&	\textbf{266}	&	269\\
\hline
\textbf{275}	&	\textbf{273}	&	274	&	\textbf{271}	&	272\\
\hline
    \end{tabular}
    \hfill
    \begin{tabular}{|c|c|c|c|c|}
    \hline   
279	&	278	&	277	&	280	&	281\\
\hline
\textbf{287}	&	\textbf{285}	&	286	&	\textbf{283}	&	284\\
\hline
291	&	\textbf{292}	&	289	&	\textbf{290}	&	293\\
\hline
295	&	299	&	298	&	297	&	296\\
\hline
303	&	302	&	301	&	304	&	305\\
\hline
307	&	311	&	310	&	309	&	308\\
\hline
315	&	314	&	313	&	316	&	317\\
\hline
319	&	323	&	322	&	321	&	320\\
\hline
327	&	326	&	325	&	328	&	329\\
\hline
331	&	335	&	334	&	333	&	332\\
\hline
339	&	338	&	337	&	340	&	341\\
\hline
343	&	\textbf{345}	&	346	&	\textbf{347}	&	344\\
\hline
351	&	\textbf{352}	&	349	&	\textbf{350}	&	353\\
\hline
355	&	\textbf{357}	&	358	&	\textbf{359}	&	356\\
\hline
363	&	362	&	361	&	364	&	365\\
\hline
367	&	371	&	370	&	369	&	368\\
\hline
375	&	374	&	373	&	376	&	377\\
\hline
379	&	383	&	382	&	381	&	380\\
\hline
387	&	386	&	385	&	388	&	389\\
\hline
391	&	395	&	394	&	393	&	392\\
\hline
399	&	398	&	397	&	400	&	401\\
\hline
403	&	\textbf{405}	&	406	&	\textbf{407}	&	404\\
\hline
411	&	\textbf{412}	&	409	&	\textbf{410}	&	413\\
\hline
415	&	419	&	418	&	417	&	416\\
\hline
    \end{tabular}
    \caption{Labeling of the Stacked Pentagon for $i=1$ to $70$}
    \label{first350}
\end{table}

Two important facts regarding these newly assigned labels can be observed: each label is relatively prime with any adjacent label, resulting in a coprime labeling of the graph up to $n=70$, and also that the largest difference between adjacent labels is 10.

To label the stacked pentagonal prism graph when $n> 70$, we assign for $i>70$ and $j=1,\ldots 5$ the label $\ell(v_{i,j})=\ell(v_{i-70,j})+420$.  Since the greatest distance between adjacent labels on the first~$70$ pentagons was $10$, only common prime factors $2$, $3$, $5$, and $7$ need to be considered.  The shift by $420$, which only has these four prime numbers as factors, results in the relatively prime condition remaining satisfied for all vertex pairs with $i\geq 71$.  The only exceptions that need to be verified are edges of the form $v_{70m,j}v_{70m+1,j}$ for some positive integer $m$.  We have the following pairs of labels that are adjacent:
\begin{align*}
    \{\ell(v_{70m,1})=420m-5&,\; \ell(v_{70m+1,1})=420m+3\}\\
    \{\ell(v_{70m,2})=420m-1&,\; \ell(v_{70m+1,2})=420m+2\}\\
    \{\ell(v_{70m,3})=420m-2&,\; \ell(v_{70m+1,3})=420m+1\}\\
    \{\ell(v_{70m,4})=420m-3&,\; \ell(v_{70m+1,4})=420m+4\}\\
    \{\ell(v_{70m,5})=420m-4&,\; \ell(v_{70m+1,5})=420m+5.\}
\end{align*}
The first pair is relatively prime since they are odd integers differing by a power of $2$.  The second, third, and fifth pairs are not multiples of 3 and are separated by either 3 or 9.  Finally, the fourth pair are separated by 7, but since 7 divides 420, neither of these are multiples of 7.  Thus, these adjacent pairs of vertices all have relatively prime labels, making this a coprime labeling for $n\geq 71$ as well.

Note that the largest label $\ell(v_{i,j})$ for $j=1,\ldots, 5$ is $6i-1$ for all $i$, hence the largest label for $Y_{5,n}$ is $6n-1$.  Since we used the maximum of $2$ even labels on each pentagon along with the smallest possible odd labels, we have proven that $\pr(Y_{5,n})=6n-1$.
\end{proof}

For the graphs $Y_{3,n}$ and $Y_{5,n}$, the minimum coprime numbers were directly correlated to their independence number.  This number is easy to obtain for stacked prisms involving larger odd cycles as well.  Since $\alpha(C_{2k+1})=k$, one can observe that $\alpha(Y_{2k+1,n})=kn$.  A minimum coprime labeling could use at most $kn$ even labels, forcing $(2k+1)n-kn=(k+1)n$ odd labels to be used, leading to the following conjecture. 

\begin{conjecture}
The minimum coprime number for stacked $(2k+1)$-gon prism graph is
$$\pr(Y_{2k+1,n})=2(k+1)n-1.$$
\end{conjecture}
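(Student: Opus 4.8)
The plan is to separate the two inequalities. The lower bound $\pr(Y_{2k+1,n}) \ge 2(k+1)n-1$ is essentially already established in the discussion preceding the statement: since any two even labels share the factor $2$, all even labels must sit on an independent set, so at most $\alpha(Y_{2k+1,n}) = kn$ of them may be used. The remaining $(2k+1)n - kn = (k+1)n$ vertices must receive odd labels, and the $(k+1)n$ smallest odd numbers reach up to $2(k+1)n-1$. Hence any labeling confined to $\{1,\dots,2(k+1)n-1\}$ is automatically optimal, and it remains only to exhibit one.

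For the upper bound I would assign one block of consecutive integers per polygon. Writing $c_i = 2(k+1)(i-1)$, give the $i$-th copy of $C_{2k+1}$ the block $B_i = \{c_i+1, c_i+2, \dots, c_i+2k+1\}$ of $2k+1$ consecutive integers; these blocks tile $\{1,\dots,2(k+1)n-1\}$ while skipping exactly the multiples of $2(k+1)$, so each $B_i$ contains precisely $k+1$ odd and $k$ even labels. This matches the independence count exactly, so the even labels are forced onto an independent transversal and the construction is tight by design. Within a single polygon the cycle edges are nearly free: arranging consecutive integers around $C_{2k+1}$ makes all but the wrap-around edge automatically coprime, and one chooses the cyclic order so the closing edge joins two odd labels differing by $2$ or one involving a unit, exactly as in the $Y_{3,n}$ and $Y_{5,n}$ constructions.

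The heart of the argument is the vertical edges $v_{i,j}\sim v_{i+1,j}$. If position $j$ in polygon $i$ carries label $c_i + \sigma_i(j)$ for a permutation $\sigma_i$ of $\{1,\dots,2k+1\}$, then the vertical difference is
\[
\ell(v_{i+1,j}) - \ell(v_{i,j}) = 2(k+1) + \bigl(\sigma_{i+1}(j) - \sigma_i(j)\bigr),
\]
so its magnitude is at most $4k+2$ (giving $10$ when $k=2$, matching the maximal difference observed in the $Y_{5,n}$ proof). Using one fixed permutation makes every such difference equal to $2(k+1)$, which fails whenever an odd prime dividing $k+1$ divides both endpoints; one must therefore alternate between (at least) two permutations on odd- and even-indexed polygons, as in the $Y_{5,n}$ proof, to spread the differences over several values and dodge each small prime. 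I would then propagate the labeling by building an explicit valid scheme over a base of $P$ consecutive polygons and extending via $\ell(v_{i,j}) = \ell(v_{i-P,j}) + 2(k+1)P$, choosing $P$ so that $2(k+1)P$ is divisible by every prime $p \le 4k+2$; this preserves coprimality because the residues governing each vertical edge recur unchanged (the analogue of the period $70$ and the shift $420 = 2(k+1)\cdot 70$ for $k=2$).

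The main obstacle is precisely this base period. For each fixed $k$ it is a finite verification — arrange $2k+1$ consecutive integers around each of $P$ polygons so that both the cyclic and the vertical conditions hold, where $P$ and the number of ad hoc reassignments grow with $k$ — but producing a single family of alternating permutations and fixes that provably works for \emph{all} $k$ is what keeps this a conjecture rather than a theorem. A clean proof would require a uniform combinatorial description of the permutations together with a residue-counting argument showing that the primes $p = 3,5,7,\dots$ dividing the various vertical differences can always be avoided simultaneously; absent such a uniform scheme, the period and case analysis balloon with $k$, mirroring the jump in complexity already visible between the three cases needed for $Y_{3,n}$ and the table of $46$ reassignments needed for $Y_{5,n}$.
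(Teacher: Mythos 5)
The statement you are attempting is a \emph{conjecture} in the paper, not a theorem: the authors prove no upper bound for general $k$, and the only argument they give is the lower-bound discussion immediately preceding it (since $\alpha(Y_{2k+1,n})=kn$, at most $kn$ even labels can be used, forcing $(k+1)n$ odd labels and hence a largest label of at least $2(k+1)n-1$). Your first paragraph reproduces exactly this reasoning, correctly, so on the lower bound you and the paper coincide.

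For the upper bound, your proposal is an honest and sensible generalization of the paper's $Y_{3,n}$ and $Y_{5,n}$ constructions --- the block structure $B_i=\{2(k+1)(i-1)+1,\dots,2(k+1)(i-1)+2k+1\}$ skipping multiples of $2(k+1)$ is precisely what the paper uses for $k=1$ (blocks $4i-3,4i-2,4i-1$) and $k=2$ (blocks $6i-5,\dots,6i-1$), and the periodic-extension idea mirrors the period-$70$, shift-$420$ trick in the $Y_{5,n}$ proof. But as you yourself say, this is a plan, not a proof: the construction of a base period of permutations avoiding all primes $p\le 4k+2$ simultaneously, uniformly in $k$, is exactly the missing ingredient, and no amount of restating the $k=1,2$ cases supplies it. So there is a genuine gap --- the entire upper bound --- but it is the same gap that the paper leaves open, which is why the statement appears there as a conjecture. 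Your write-up would be strengthened by making explicit, at the outset, that you are establishing only the lower bound and proposing a strategy for the rest; as it stands the phrase ``proof proposal'' should not be mistaken for a claim that the conjecture is resolved.
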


It should be noted to close this section that while we focused on the odd case, the stacked prisms with even-length cycles, $Y_{2k,n}$, satisfies $\alpha(Y_{2k,n})=kn=\frac{|V(Y_{2k,n})|}{2}$.  Therefore, one would hope a prime labeling exists in this case, but this remains an open problem. 

\section{Variation of Generalized Petersen Graph}


We extend the definition of the generalized Petersen graph for the case of even $n$ and $k=\frac{n}{2}$.  We denote this graph by $GP^*(2k,k)$ and use the same notation for the vertices $u_1,\ldots, u_n$ and $v_1,\ldots, v_n$ with edges $v_i\sim v_{i+1}$, $v_i\sim u_i$ and $u_i\sim u_{i+k}$ where indices are calculated modulo $n$.  Note that this graph differs from generalized Petersen graphs since $deg(u_i)=2$ for all $i=1,\ldots, n$ instead of the usual degree of $3$ for $GP(n,k)$. For an example of a minimum coprime labeling of $GP^*(20,10)$, see Figure~\ref{fig:gp2kk}.

\begin{figure}[htb]
    \centering
    \includegraphics{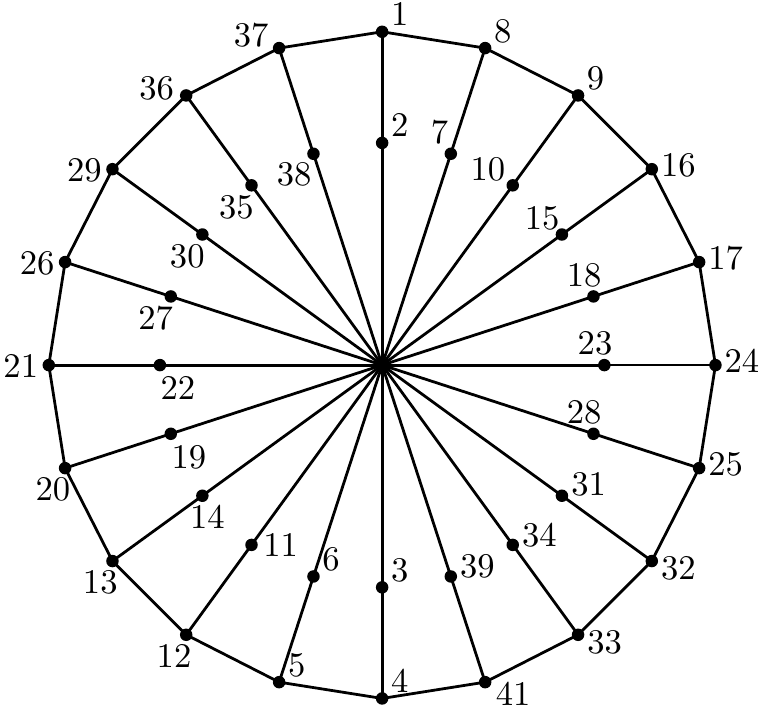}
    \caption{Example of a minimum coprime labeling of $GP^*(20,10)$.}
    \label{fig:gp2kk}
\end{figure}

\begin{theorem}
For the graph $GP^*(2k,k)$ in which $k\geq 2$, we have the following
\begin{enumerate}
\item $GP^*(2k,k)$ is prime if $k$ is odd,
\item $\pr(GP^*(2k,k))=4k+1$ if $k$ is even.
\end{enumerate}
\end{theorem}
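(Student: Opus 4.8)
The plan is to treat both parts by first pinning down the combinatorial structure of $GP^*(2k,k)$ and then splitting on the parity of $k$. First I would record a decomposition into $k$ vertex-disjoint $4$-vertex paths: for each $j\in\{1,\dots,k\}$ the vertices $v_j,u_j,u_{j+k},v_{j+k}$ induce the path $v_j - u_j - u_{j+k} - v_{j+k}$ (using the spokes $v_ju_j$, $v_{j+k}u_{j+k}$ and the matching edge $u_ju_{j+k}$), and these $k$ paths partition all $4k$ vertices. Since any independent set meets each $P_4$ in at most two vertices, $\alpha(GP^*(2k,k))\le 2k$. Next I would exhibit the parity $2$-coloring that makes the graph bipartite exactly when $k$ is odd: let $A=\{v_i:i\text{ odd}\}\cup\{u_i:i\text{ even}\}$ and let $B$ be its complement. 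A direct check of the three edge types shows $A$ is independent when $k$ is odd, the key point being that the matching edge $u_iu_{i+k}$ joins opposite index parities precisely because $k$ is odd. Thus $|A|=|B|=2k$ and $\alpha=2k$ in that case.

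For the odd case I would build an explicit prime labeling. Because $\{1,\dots,4k\}$ contains exactly $2k$ even numbers and no two even labels may be adjacent, every prime labeling is forced to place all even labels on one part and all odd labels on the other; by bipartiteness each edge then joins an even label to an odd label, so the factor-$2$ obstruction disappears and the only remaining requirement is that no edge has both endpoints divisible by a common odd prime. I would therefore specify a bijection from the even numbers onto $A$ and from the odd numbers onto $B$ and verify coprimality edge class by edge class—cycle edges $v_iv_{i+1}$, spokes $v_iu_i$, and matching edges $u_iu_{i+k}$—using Observation~\ref{easy} to reduce most checks to consecutive labels, labels differing by a prime, or a label equal to $1$. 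This establishes that $GP^*(2k,k)$ is prime for odd $k$.

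For even $k$ I would prove the two-sided bound $\pr(GP^*(2k,k))=4k+1$. For the lower bound I claim $\alpha<2k$, so no prime labeling exists. Indeed, attaining $2k$ would force each $P_4$ column to contribute exactly two vertices; every size-$2$ independent set of a path $a-b-c-d$ contains at least one endpoint, so each column contributes at least one of $v_j,v_{j+k}$, and over the $k$ columns this yields at least $k$ chosen $v$-vertices. Cycle independence caps the number of chosen $v$'s at $k$, so exactly one $v$ is chosen per column, and these $k$ vertices form a maximum independent set of $C_{2k}$, i.e.\ all $v_i$ of a single index parity. But with $k$ even the indices $j$ and $j+k$ share parity, so such a set contains both or neither of $v_j,v_{j+k}$—never exactly one—a contradiction. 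Hence $\alpha\le 2k-1$, and since a coprime labeling with largest label $\le 4k$ on these $4k$ vertices would be forced to use all of $\{1,\dots,4k\}$ and hence be a prime labeling, we get $\pr\ge 4k+1$.

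For the matching upper bound I would give a coprime labeling whose largest label is $4k+1$: omit one even label, place the remaining $2k-1$ even labels on an independent set of size $2k-1$, and place all $2k+1$ odd labels $1,3,\dots,4k+1$ on the complement. The new feature, compared with part 1, is that the odd-labeled side is no longer independent, so I must additionally verify coprimality on edges joining two odd labels. The genuine obstacle in both parts is exactly this explicit assignment: controlling shared odd-prime factors ($3,5,7,\dots$) across all edges—and, in part 2, among adjacent odd labels as well. I expect this to require either residue-class casework or a periodic block construction verified once and then shifted by a highly composite period (as in the proof for $Y_{5,n}$), whereas the independence and counting arguments above are routine by comparison.
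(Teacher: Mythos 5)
Your structural analysis is sound, and in places cleaner than the paper's: the decomposition into $k$ vertex-disjoint paths $v_j - u_j - u_{j+k} - v_{j+k}$, the parity bipartition $A=\{v_i: i \text{ odd}\}\cup\{u_i: i \text{ even}\}$ for odd $k$, and the even-$k$ lower bound (each $P_4$ must contribute exactly one endpoint, forcing an alternating set on $C_{2k}$, impossible when $j$ and $j+k$ share parity) are all correct; the paper's own non-primality argument for even $k$ is essentially this, stated more opaquely in terms of counts of even labels.

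However, there is a genuine gap, and it is exactly where you flag it: you never construct the labelings, for either part. Saying ``I would specify a bijection from the even numbers onto $A$ and from the odd numbers onto $B$ and verify coprimality edge class by edge class'' is not a proof step---it is the entire theorem. The reduction to ``only odd-prime factors matter'' buys nothing by itself, since bipartiteness with equal parts does not imply primality (e.g., $K_{n,n}$ has $\alpha = |V|/2$ yet is not prime for $n\geq 4$); whether the odd-prime conditions can be simultaneously satisfied is precisely the content of the statement. The paper's proof spends essentially all of its length on this: an explicit assignment (its Equation (6.1)) in which every spoke $v_iu_i$ and every matching edge $u_iu_{i+k}$ receives consecutive integers, cycle edges receive labels differing by $7$ (or trivially coprime pairs), followed by five cases of local label swaps, governed by the residues of the problematic label modulo $3$, $5$, and $7$, to repair the cycle edges whose endpoints are both divisible by $7$; and then, for even $k$, a further modification replacing the label $4k$ by $4k+1$, with sub-cases depending on whether the vertex $v_k$ was already touched by Case 2, 3, or 5. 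None of this casework is routine bookkeeping that can be waved through---each swap must be checked against all neighbors, and the interactions between cases (adjacent blocks, the wrap-around at $i=2k-1$, and the collision at $v_k$ in part 2) are where the proof could fail. Your proposal correctly identifies the skeleton but leaves the load-bearing construction as a conjecture, so as it stands it proves only the lower bounds: $\alpha \le 2k-1$ and $\pr \ge 4k+1$ for even $k$, and nothing yet for odd $k$.
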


\begin{proof}
We first assume $k$ is odd.  We begin by labeling the vertices of $GP^*(2k,k)$ as follows

\begin{equation}\label{variation}
\ell(v_i)=\begin{cases}
4i-3 & \mbox{ for $i=1,3,5,\ldots, k$}\\
4i & \mbox{ for $i=2,4,6,\ldots, k-1$}\\
4i-4k &\mbox{ for $i=k+1,k+3,k+5,\ldots, 2k$}\\
4i-3-4k &\text{ for $i=k+2,k+4,k+6,\ldots, 2k-1$.}\\
\end{cases}
\end{equation}

\begin{equation*}
\ell(u_i)=\begin{cases}
4i-2 & \mbox{ for $i=1,3,5,\ldots, k$}\\
4i-1 & \mbox{ for $i=2,4,6,\ldots, k-1$}\\
4i-1-4k &\mbox{ for $i=k+1,k+3,k+5,\ldots, 2k$}\\
4i-2-4k &\text{ for $i=k+2,k+4,k+6,\ldots, 2k-1$.}\\
\end{cases}
\end{equation*}

For the pairs $v_i\sim v_{i+1}$ and $v_{i+k}\sim v_{i+1+k}$ for $i\in\{1,\ldots,k-1\}$, it is clear that $\gcd\{\ell(v_i),\ell(v_{i+1})\}\in\{1,7\}$ and $\gcd\{\ell(v_{i+k}),\ell(v_{i+k+1})\}\in \{1,7\}$. Notice that $\gcd\{\ell(v_{k}),\ell(v_{k+1})\}=\gcd\{4k-3,4\}=1$ and $\gcd\{\ell(v_{2k}),\ell(v_{1})\}=\gcd\{4k-1,1\}=1$. For the edges $u_i u_{i+k}$, it is easily verified that when $i\in\{1,3,\ldots,k\}$,
\[
\gcd\{\ell(u_i),\ell(u_{i+k})\}=\gcd\{4i-2,4(i+k)-1-4k\} = \gcd\{4i-2,4i-1\}=1.
\]
Similarly, $\gcd\{\ell(u_i),\ell(u_{i+k})\}=1$ when $i\in\{2,4,\ldots,k-1\}$ and $\gcd\{\ell(u_i),\ell(v_i)\} = 1$ for all $i=1,\ldots, 2k$ since these pairs are consecutive integers. Thus, our only concern with the labeling is when $u_i$ and $u_{i+1}$ are both divisible by 7. 
We handle these instances by breaking the proof into several cases based on the remainder of $\ell(v_i)/3$ and whether or not $5$ divides $\ell(v_i)+2$. For the sake of simplicity, let $\ell(v_i)=a$ where $i\in\{1,3,\ldots,k\}$. 
Notice that in our labeling, $\ell(v_i)=a$ is odd in these cases in which $\ell(v_{i+1})-\ell(v_i)=7$.
In each of the cases below, we are supposing that $a\equiv 0 \pmod{7}$. 

\noindent \textbf{Case 1:} Suppose that $a\equiv 1\pmod{3}$.

In this case, swap the labels $a$ and $a-2$. Notice that $\ell(u_{i-1})=a-2$. Since $a$ is not divisible by $3$, $\gcd\{a,a-3\}=1$. Since $a-2$ is also not divisible by 3, $\gcd\{a-2,a+7\}=\gcd\{a-2,a+1\}=1$. Thus, the two labels involved in the swap are relatively prime with all adjacent labels.\hfill $\diamond$

\noindent \textbf{Case 2:} Suppose that $a\equiv 0\pmod{3}$ and $5 \nmid a$.

In this case, swap the labels $a+5$ and $a+7$ on the vertices $v_{i+1}$ and $u_{i+1+k}$. Since $a$ is not divisible by $5$, $\gcd\{a,a+5\}=1$. Since both $a+5$ and $a+7$ are not divisible by $3$, $\gcd\{a+5,a+8\}=\gcd\{a+4,a+7\}=1$. 
Thus, all newly adjacent pairs of labels after making this swap are relatively prime.\hfill $\diamond$

\noindent \textbf{Case 3:} Suppose that $a\equiv 0\pmod{15}$.

The labels $a-1,a,a+1,a+5,a+6,a+7$ will become $a+7,a+6,a+5,a-1,a,a+1$, respectively. Based on the placement of the swapped labels, we only need to check the $\gcd$ between six newly adjacent pairs of labels. Since $a$ is a multiple of $3$, $5$, and $7$, we have 
\[
\gcd\{a+1,a+8\}=\gcd\{a-1,a+4\},\gcd\{a+5,a+2\}=\gcd\{a+7,a-2\}=\gcd\{a+1,a+6\}=1.
\]
Note that if $i=2k-1$, instead of the label $a+8$, we have label $1$ on $v_{i+2}$, which is still relatively prime with $a+1$.
Notice that the label $a+7$ is also now on a vertex adjacent to the vertex labeled $a-8$. Since $a\equiv 0\pmod{15}$, $\gcd\{a-8,a+7\}=1$. Therefore, the reassigned labels are relatively prime with any newly adjacent label.\hfill $\diamond$

\noindent \textbf{Case 4:} Suppose that $a\equiv 2\pmod{3}$ and $5 \nmid a+2$.

In this case, swap the labels $a$ and $a+2$. Since both $a$ and $a+2$ are not divisible by $3$ and $a+2$ is not divisible by $5$, it follows that
\[
\gcd\{a,a+3\} = \gcd\{a+2,a+7\} = \gcd\{a+2,a-1\}=1.
\]
Hence any new adjacencies after the swap consist of relatively prime labels. \hfill $\diamond$

\noindent \textbf{Case 5:} Suppose that $a\equiv 2\pmod{3}$ and $5\mid a+2$.

The labels $a,a+1,a+2,a+6,a+7,a+8$ are reassigned as $a+6,a+7,a+8,a+2,a+1,a$, respectively. Based on the placement of the swapped labels, as in Case 3, we need only check the $\gcd$ of six pairs of labels. Since $a\equiv 2\pmod{3}$, $a+2\equiv 0\pmod{5}$, and $a\equiv 0\pmod{7}$, 
\[
\gcd\{a-1,a+6\}=\gcd\{a+3,a+8\}=\gcd\{a+2,a+5\}=\gcd\{a+1,a+6\}=\gcd\{a,a+9\}=1.
\]
If $\ell(v_{i+2})=a+8$ in the original labeling of the vertices, then since $a$ is odd, $a\equiv 2\pmod{3}$, and $a+2\equiv 0\pmod{5}$, it follows that $\gcd\{a+15,a\}=1$. 
Since $k$ is odd, it is not possible for $\ell(v_{i+2})$ to be $4$.
It is possible that $\ell(v_{i+2})=1$ in the case of $i=2k-1$, but in this case, $\ell(v_{i+3})=8$ and the label~$a$ is adjacent to $8$, which again proves our claim.
Thus, all new pairs of adjacent labels are relatively prime.\hfill $\diamond$

Combining all cases, we have proven that the labeling is prime for all odd $k$.
Now suppose that $k$ is an even integer. In order to have $k$ even labels on the vertices $v_1,\ldots, v_n$, every other vertex $v_i$ must be even. However, for each $i$, both $v_i$ and $v_{i+k}$ will be labeled with an even integer and so neither $u_i$ or $u_{i+k}$ can be labeled with an even number. Thus, less than $k$ even labels can be used on the vertices $v_1,\ldots, v_n$.  We cannot use more than $k$ even labels on $u_1,\ldots, u_n$ since only one of $\ell(u_i),\ell(u_{i+k})$ can be even for each $i=1,\ldots, k$. Therefore, we have $\alpha(GP^*(2k,k))<2k=\frac{1}{2}|V|$, so by Lemma~\ref{ind}, this graph is not prime when $k$ is even.

We then create a minimum coprime labeling when $k$ is even by starting with the labeling $\ell$ as defined in Equation~\eqref{variation}, which originally assigned $\ell(v_{k})=4k$.  Then make the same alterations to $\ell$ as described in Cases 1-5.
If these left $\ell(v_k)$ unchanged, then reassign the label on $v_{k}$ to $4k+1$. Then $\gcd\{4k+1,4\}=\gcd\{4k-1,4k+1\}=\gcd\{4k-7,4k+1\}=1$. Now suppose that $\ell(v_{k})$ is relabeled by one of the cases above, which could have occurred in Case 2, 3, or 5. Assuming we are in Case 2, recall $\ell(u_{2k})=a+5$ and $\ell(v_k)=a+7=4k$ are the only two values swapped from Equation~\eqref{variation}.  
Instead, let $\ell(u_{2k})=4k$ and $\ell(v_k)=4k+1$. 
Then the lables of the vertices adjacent to $u_{2k}$ are $4k-3$ and $4k-1$, and the labels of the vertices adjacent to $v_k$ are $4$, $4k-7$, and $4k-1$. 
Since $4k+1$ is odd and $a=4k-7\equiv 0\pmod{3}$, it follows that $\ell(u_{2k})$ and $\ell(v_k)$ are relatively prime with the labels of the corresponding vertex neighbors. 
Suppose we are in Case 3 where $\ell(v_{k})$ is now swapped to be $4k-6$. Instead, we now relabel $v_{k}$ as $4k+1$. Then the labels of vertices adjacent to $v_{k}$ are labeled $4$, $4k-7$, and $4k-1$, all of which are relatively prime to $4k+1$.
Finally, suppose that we are in Case~5 where $\ell(v_{k})$ is currently $4k-6$ and $\ell(v_{k+1})$ is $4k-7$. We relabel $v_{k}$ as $4k+1$. 
The labels of the vertices adjacent to $v_{k}$ are $4k-5$, $4k-7$, and $4k-1$. Since $4k-5\equiv 1\pmod{3}$ based on assumptions of Case~5, it follows that $\gcd\{4k+1,4k-5\}=\gcd\{4k+1,4k-1\}=\gcd\{4k+1,4k-7\}=1$, and so each pair labels of adjacent vertices are relatively prime.  We have shown this is a coprime labeling for the case of even $k$ with largest label $4k+1$.  Therefore, since the graph isn't prime in this case, we have proven $\pr(GP^*(2k,k))=4k+1$.
\end{proof}

\bibliographystyle{amsplain}
\bibliography{references.bib}

\appendix

\section{Additional Generalized Petersen Results}
For Theorems~\ref{n+4}-\ref{n+6}, the proof for why the labeling is a coprime labeling is omitted as the pattern is clearly given in the tables and can be verified in each case. The techniques used are all outlined in Observation~\ref{easy}.  Recall that we are working under the assumption that $n$ is odd for the following theorems.

\begin{theorem}\label{n+4}
If $n+4$ is prime, then $\pr(GP(n,1))=2n+1$.
\end{theorem}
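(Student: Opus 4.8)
The plan is to mimic exactly the construction of Theorems~\ref{n}, \ref{n+2}, and \ref{2n-1}: since $GP(n,1)$ is not prime for odd $n$ (its independence number is $n-1$), we already have the lower bound $\pr(GP(n,1))>2n$, so the entire task is to exhibit a single coprime labeling using the label set $\{1,\ldots,2n+1\}$. As always, I would label $v_1,\ldots,v_n$ by $1,\ldots,n$ so that the outer cycle edges $v_iv_{i+1}$ are consecutive integers (and $v_1v_n$ contains the label $1$), leaving all the work on the inner cycle and the spokes $u_iv_i$.

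The key idea is that the hypothesis ``$n+4$ is prime'' should be exploited by arranging the spoke differences $\ell(u_i)-\ell(v_i)$ to equal $n+4$ for a stretch of indices, so that Observation~\ref{easy} forces $\gcd\{\ell(u_i),\ell(v_i)\}=1$ (as neither endpoint is a multiple of the prime $n+4$ when the labels lie in $[1,2n+1]$). Concretely I would put the label set on the $u_i$ into a table in the style of Table~\ref{tab:2n-1}, writing the inner labels as a near-consecutive run through $\{n+1,\ldots,2n+1\}$ but shifted so that $u_i$ sits $n+4$ above $v_i$ for $i=1,\ldots,n-4$, and then handling the last few spokes ($i=n-3,\ldots,n$) by a short ad hoc permutation. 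The inner-cycle edges $u_iu_{i+1}$ should then be either consecutive integers or consecutive odd integers, which are automatically coprime, and the wrap-around edge $u_1u_n$ must be checked by hand using the $\gcd\{a,b\}=\gcd\{a\pm b,b\}$ reductions.

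The step I expect to be the main obstacle is the boundary bookkeeping at the end of the inner cycle: shifting every spoke by a fixed difference of $n+4$ is impossible for all $n$ indices at once, because the labels $n+1,\ldots,2n+1$ only span a window of size $n+1$ while the spoke offset is $n+4$. So the clean ``difference is the prime $n+4$'' argument can only cover roughly $n-4$ of the spokes, and the remaining four (or so) vertices $u_{n-3},u_{n-2},u_{n-1},u_n$ must receive the leftover labels in an order that simultaneously keeps (i) each such spoke coprime to its $v_i$, (ii) the local inner-cycle edges among them coprime, and (iii) the wrap-edge $u_1u_n$ coprime. I would also need a parity observation to argue that these small leftover pairs are consecutive integers, consecutive odd integers, or sums equal to a prime, possibly splitting into residue classes of $n$ modulo a small number (as in Theorem~\ref{2n-1}, where $n\equiv 2\pmod 3$ was excluded because it forced a common factor). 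Since the excerpt states the proof is omitted and ``the pattern is clearly given in the tables,'' I would present the construction purely as a labeling table analogous to Table~\ref{tab:2n-1} and Table~\ref{s-conj}, with the verification reduced to reading off consecutive/consecutive-odd pairs and the single prime-difference condition, exactly as Observation~\ref{easy} permits.
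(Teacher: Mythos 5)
Your proposal matches the paper's proof essentially step for step: the same lower bound from $\alpha(GP(n,1))=n-1$, the outer cycle labeled $1,\ldots,n$, the bulk of the inner vertices labeled $u_i=i+(n+4)$ (for $i=1,\ldots,n-3$, since the window $n+5,\ldots,2n+1$ holds exactly $n-3$ labels) so those spokes are coprime by the prime-difference observation, and an ad hoc assignment of the leftover labels to the last few inner vertices with a case split on $n\bmod 3$ (the case $n\equiv 2\pmod 3$ being vacuous since then $3\mid n+4$). The tail permutation you defer is exactly what the paper's Tables~\ref{tab:n+4} and~\ref{tab:n+4b} supply (e.g.\ $u_{n-2},u_{n-1},u_n\mapsto n+1,n+2,n+4$ when $n\equiv 0\pmod 3$, with the last two $v$-labels also permuted when $n\equiv 1\pmod 3$), verified by precisely the difference-$3$ and difference-$4$ coprimality checks you anticipate.
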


\begin{proof}
We can assume $n\not\equiv 2\pmod{3}$, else $n+4$ would be divisible by 3 and hence not prime.
If $n\equiv 0\pmod{3}$, then we use the labeling defined in Table~\ref{tab:n+4}. 
If $n\equiv 1\pmod{3}$, then we alter the previous labeling and instead use the labeling defined in Table~\ref{tab:n+4b}.

\end{proof}

\begin{theorem}\label{n-2}
If $n-2$ is prime and $n>5$, then $\pr(GP(n,1))=2n+1$.
\end{theorem}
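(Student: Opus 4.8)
The plan is to mirror the template of Theorems~\ref{n}--\ref{2n-1}: the lower bound $\pr(GP(n,1)) > 2n$ is immediate because $n$ odd forces $\alpha(GP(n,1)) = n-1 < n$, so by Lemma~\ref{ind} the graph is not prime, and it then suffices to exhibit a coprime labeling whose largest label is $2n+1$. I would place $1, 2, \ldots, n$ on the outer cycle $v_1, \ldots, v_n$ and try to make every spoke $v_i u_i$ have label-difference exactly $p := n - 2$. Since $n-2$ is prime, Observation~\ref{easy} makes any such spoke coprime as long as neither endpoint is a multiple of $p$, and the hypothesis $n > 5$ guarantees $p \ge 5 > 3$, which is exactly what is needed so that labels congruent to $\pm 3 \pmod p$ on the exceptional edges are not multiples of $p$.

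The natural assignment $u_i = i + (n-2)$ puts $u_3, \ldots, u_n$ on the consecutive block $n+1, \ldots, 2n-2$ (making the interior $u$-cycle edges consecutive integers, hence coprime), but it breaks down in exactly three columns that I would repair by hand using the three top labels $2n-1, 2n, 2n+1$: columns $i = 1, 2$, whose difference-$p$ values $n-1, n$ are already used on the $v$-cycle, and the column $i = n-2$, where $v_{n-2} = n-2$ and $u_{n-2} = 2(n-2)$ are both multiples of $p$. I would reassign these three spokes (skipping one of $2n-1, 2n, 2n+1$ so the odd ones stay available) so that the even label $2(n-2)$ lands on the spoke at $v_1 = 1$, where coprimality is automatic, and the odd top labels cover columns $2$ and $n-2$. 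As in Theorems~\ref{2n-1} and~\ref{n+4}, I would record the final assignment in a table (top row $v$, bottom row $u$) and verify the three edge-types -- interior $u$-cycle edges, interior spokes, and the wrap edges $u_n u_1$, $u_1 u_2$, $v_n v_1$ -- directly via Observation~\ref{easy}.

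Because $n-2$ is prime and exceeds $3$, we have $n \not\equiv 2 \pmod 3$, so I would split the argument into the two cases $n \equiv 0 \pmod 3$ and $n \equiv 1 \pmod 3$; the wrap-around edges joining the repaired high labels to the consecutive block carry differences divisible by small primes (notably $3$ and $5$), and their coprimality depends on $n \bmod 3$ and $n \bmod 5$, so a single table does not suffice and the two residue classes get slightly different placements of $2n-1, 2n, 2n+1$.

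The step I expect to be the main obstacle is precisely this reconciliation at the three exceptional columns: the label $2(n-2)$ is forced to be even and cannot sit on the multiple-of-$p$ spoke it naturally occupies, while the remaining high labels $2n-1, 2n, 2n+1$ must simultaneously satisfy the cyclic adjacencies $u_1 u_2$ and $u_n u_1$ without sharing a factor of $3$ or $5$. Pinning down one placement that works in each residue class -- and checking the genuinely tight small cases such as $n = 7$, where $p = 5$ makes the factor-$5$ collisions most constraining -- is the delicate bookkeeping; once a valid table is fixed, every remaining verification reduces to the consecutive-integer, consecutive-odd, or prime-difference patterns already licensed by Observation~\ref{easy}, and minimality follows as before from Lemma~\ref{ind}.
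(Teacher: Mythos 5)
Your overall template does match the paper's: lower bound from Lemma~\ref{ind}, outer cycle labeled $1,\ldots,n$, spokes of difference $p=n-2$, the case split on $n\bmod 3$ (forced since $n-2$ prime gives $n\not\equiv 2\pmod 3$), and a repair at the three bad columns $1$, $2$, $n-2$. But the specific repair you commit to cannot work, and the obstruction is parity, not the factor-$3$-or-$5$ bookkeeping you anticipate. If columns $3,\ldots,n$ keep the natural assignment $u_i=n+i-2$, then $u_3=n+1$ and $u_n=2n-2$ are both even (recall $n$ is odd), so both $u_1$ and $u_2$ are forced to receive \emph{odd} labels: parking the even label $2n-4$ at $u_1$ makes the inner-cycle edge $u_nu_1$ join $2n-2$ and $2n-4$, which share a factor of $2$ for every $n$. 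Your placement of the odd top labels also breaks: on column $n-2$ the choice $u_{n-2}=2n+1$ gives $\gcd\{n-2,2n+1\}=\gcd\{n-2,5\}$, which fails at $n=7$; so you are forced to $u_{n-2}=2n-1$ and hence $u_2=2n+1$, but then $u_1u_2=\{2n-4,2n+1\}$ has difference $5$ and fails whenever $n\equiv 2\pmod 5$ (again $n=7$: the pair $\{10,15\}$). So at $n=7$, precisely the tight case you singled out, every variant of your plan collapses.

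The paper's construction respects the parity constraint: it places the two odd high labels $2n-1,2n+1$ on $u_1,u_2$ (the label $2n$ is never used at all), and it resolves the column-$(n-2)$ collision not by importing a high label there but by permuting the \emph{tail} of the consecutive block. When $n\equiv 0\pmod 3$, columns $n-2,n-1,n$ receive $2n-2,2n-3,2n-4$, so the spoke at column $n-2$ has labels $n-2$ and $2n-2$ with difference $n$, and $\gcd\{n-2,2n-2\}=\gcd\{n-2,n\}=\gcd\{n-2,2\}=1$ since $n$ is odd; the resulting difference-$3$ inner-cycle edges $\{2n-5,2n-2\}$ and $\{2n-4,2n-1\}$ are safe exactly because $3\mid n$. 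When $n\equiv 1\pmod 3$ those difference-$3$ edges would fail, so the paper instead permutes the last five columns as $2n-4,2n-5,2n-6,2n-3,2n-2$. To salvage your proof you would need to abandon the idea of moving $2n-4$ to $u_1$ and adopt a tail rearrangement of this kind; the rest of your outline (lower bound, spoke coprimality via Observation~\ref{easy}, the two residue cases) then goes through as in the paper.
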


\begin{proof}
We may assume $n\not\equiv 2\pmod{3}$ to avoid $n-2$ being a multiple of 3. If $n\equiv 0\pmod{3}$, then we use the labeling defined in Table~\ref{tab:n-2}.  If $n\equiv 1\pmod{3}$, we instead label the graph using the labeling in Table~\ref{tab:n-2b}.  Note that in both cases, the labels on most edges $u_i\sim v_i$ are relatively prime since the difference of the labels is the prime $n-2$, but a swap is necessary to avoid both labels being multiples of that prime number.
\end{proof}

\begin{theorem}\label{n-4}
If $n-4$ is prime and $n>7$, then $\pr(GP(n,1))=2n+1$.
\end{theorem}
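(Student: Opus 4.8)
\textbf{Proof proposal for Theorem~\ref{n-4}.}

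The plan is to follow the same template used in Theorems~\ref{n+4} and~\ref{n-2}, since the hypothesis that $n-4$ is prime makes the difference $n-4$ between the two cycles the natural quantity to exploit. First I would label $v_1,\ldots,v_n$ with $1,\ldots,n$ and label $u_1,\ldots,u_n$ by an arithmetic-style sequence so that for most indices $i$ the labels on $v_i$ and $u_i$ differ by exactly $n-4$; then by Observation~\ref{easy}, as long as neither label is a multiple of the prime $n-4$, the pair $u_i\sim v_i$ is automatically coprime. The labels on the two cycles themselves will be arranged to be consecutive integers (or consecutive odd integers, or to involve the label $1$, or the coprime pair $n+1,2n+1$) so that the edges $v_iv_{i+1}$ and $u_iu_{i+1}$ — including the wrap-around edges $v_1v_n$, $u_1u_n$, $u_{n-1}u_n$ — are handled by the same routine arguments used throughout Section~3. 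The largest label will be $2n+1$, matching the lower bound $\pr(GP(n,1))>2n$ coming from $\alpha(GP(n,1))=n-1$ for odd $n$ (Lemma~\ref{ind}).

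Next I would dispose of the trivial divisibility obstruction: if $n\equiv 1\pmod 3$ then $n-4\equiv 0\pmod 3$, so $n-4$ could only be prime when $n-4=3$, i.e.\ $n=7$, which is excluded by the hypothesis $n>7$. Hence I may assume $n\not\equiv 1\pmod 3$, and split into the two surviving residues $n\equiv 0\pmod 3$ and $n\equiv 2\pmod 3$, presenting an explicit labeling table for each (paralleling Table~\ref{tab:n-2} and Table~\ref{tab:n-2b}). The two tables will differ only in a small local swap near one end of the $u$-row, chosen so that the single index where both $v_i$ and $u_i$ would be multiples of the prime $n-4$ is avoided.

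The one genuinely non-routine step — the same one flagged in Theorem~\ref{n-2} — is precisely this avoidance: among the pairs $\{u_i,v_i\}$ whose labels differ by $n-4$, there is exactly one index $i$ at which $v_i=i$ is itself a multiple of $n-4$ (namely $i=n-4$, since $1\le n-4<n$), and at that index $u_i=v_i+(n-4)$ would also be a multiple of $n-4$, breaking the coprimality. So I would interchange two nearby labels in the $u$-row to move one of these two vertices off the offending residue class, and then verify that the handful of newly created adjacencies (the swapped $u$-labels against their cycle-neighbors and against the corresponding $v$-labels) remain coprime; this is a finite check of consecutive-integer, consecutive-odd, or small-fixed-difference pairs, exactly as in the earlier proofs. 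The reason the two residue cases $n\equiv0$ and $n\equiv2\pmod3$ must be separated is that the swap introduces a pair whose labels differ by $3$, and only one of the two residues guarantees that neither member of that pair is divisible by $3$; choosing the swap appropriately in each case completes the argument and shows the construction is a coprime labeling with maximum label $2n+1$, whence $\pr(GP(n,1))=2n+1$.
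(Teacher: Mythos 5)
Your proposal matches the paper's proof in all essentials: the paper also labels $v_i=i$, places the $u$-labels so that most pairs $u_iv_i$ differ by the prime $n-4$, rules out $n\equiv 1\pmod 3$ exactly as you do, and handles the unique bad index $i=n-4$ (where $v_i=n-4$ and $u_i=2(n-4)$ would collide) by a small local swap in the $u$-row, with the two residues $n\equiv 0,2\pmod 3$ requiring different swaps precisely because the swap creates difference-$3$ pairs (Tables~\ref{tab:n-4} and~\ref{tab:n-4b}). The only cosmetic difference is that the paper's tables also adjust the label on $u_3$ ($2n-2$ versus $2n$) between the two cases, but this is the same kind of mod-$3$ bookkeeping you describe.
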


\begin{proof}
We may assume $n\not\equiv 1\pmod{3}$, else $n-4$ is a multiple of 3.
If $n\equiv 0\pmod{3}$, then we use the labeling defined in Table~\ref{tab:n-4}. 
If $n\equiv 2\pmod{3}$, then use the same labeling defined in Table~\ref{tab:n-4b}. 
\end{proof}

\begin{theorem}\label{2n+3}
If $2n+3$ is prime, then $\pr(GP(n,1))=2n+1$. 
\end{theorem}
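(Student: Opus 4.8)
The plan is to mimic the explicit-table constructions used in Theorems~\ref{n}--\ref{2n-1} and Theorem~\ref{conj}, since the hypothesis ``$2n+3$ is prime'' is exactly the kind of single-prime assumption that lets a sum-based argument run. The target is again $\pr(GP(n,1))=2n+1$, and the lower bound $\pr(GP(n,1))>2n$ is free: for odd $n$ the independence number is $n-1$, so by Lemma~\ref{ind} no prime labeling exists. Thus the entire task is to exhibit a coprime labeling whose largest label is $2n+1$, i.e.\ a bijection from $V(GP(n,1))$ to $\{1,\ldots,2n+1\}\setminus\{2n\}$ (omitting exactly one even value below $2n+1$) that keeps adjacent labels coprime.

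The design principle is to arrange the $u_iv_i$ rung pairs so that as many of their label-sums as possible equal the prime $2n+3$, invoking Observation~\ref{easy} to conclude coprimality of each such rung. Concretely, I would label $v_1,\ldots,v_n$ with $1,\ldots,n$ so that the cycle edges $v_iv_{i+1}$ are consecutive (and $v_1v_n$ contains the label $1$), and then choose the $u_i$ labels from the remaining set $\{n+1,\ldots,2n+1\}\setminus\{2n\}$ so that $\ell(u_i)+\ell(v_i)=2n+3$ for as many $i$ as possible. Since $\ell(v_i)=i$, the sum condition forces $\ell(u_i)=2n+3-i$, which ranges over $\{n+3,\ldots,2n+2\}$; the value $2n+2$ and $2n+3$ fall outside our allowed label set, so a small number of rungs near $i=1$ cannot use the sum-$=2n+3$ trick and must be handled by hand (using the label $1$ on the neighboring vertex, consecutive/consecutive-odd pairs, or the relatively-prime pair $n+1,2n+1$ exactly as in Theorem~\ref{conj}). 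The cleanest route is to present the labeling in a two-row table analogous to Table~\ref{s-conj}, verify that the top-row edges $v_iv_{i+1}$ and the bottom-row edges $u_iu_{i+1}$ are coprime (consecutive integers, consecutive odd integers, or a small explicit list), confirm the wrap-around edges $v_1v_n$, $u_1u_n$, $u_{n-1}u_n$, and then check each rung $u_iv_i$ either has sum $2n+3$ or is one of the finitely many hand-checked exceptions.

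The main obstacle will be placing the two ``special'' labels $n+1$ and $2n+1$ and reconciling the omission of $2n$ with the sum-$=2n+3$ scheme, since those three values break the uniform $\ell(u_i)=2n+3-i$ pattern and create the handful of irregular rungs and bottom-row edges that must be verified individually. As in Theorem~\ref{2n-1}, I anticipate a possible split into subcases according to $n \bmod 3$ (to avoid a multiple of $3$ sabotaging one of the short off-pattern edges, and noting $2n+3$ itself forces $n\not\equiv 0 \pmod 3$), together with small-$n$ boundary restrictions so the exceptional rungs near $i=1$ do not collide. Once those finitely many adjacencies are checked directly via Observation~\ref{easy}, the construction yields a coprime labeling with largest label $2n+1$, and combined with the independence-number lower bound this gives $\pr(GP(n,1))=2n+1$ whenever $2n+3$ is prime.
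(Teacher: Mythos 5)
Your skeleton is indeed the paper's: rung pairs summing to the prime $2n+3$ (so Observation~\ref{easy} gives coprimality), hand-checked boundary edges, a case split on $n\bmod 3$, and the independence-number lower bound; the paper's proof is literally two tables of this shape (Tables~\ref{tab:2n+3} and~\ref{tab:2n+3b}). However, your two structural commitments --- $\ell(v_i)=i$ for all $i$ \emph{and} omitting the label $2n$ --- are jointly fatal, and no amount of hand-checking repairs them. Under the sum scheme $\ell(u_i)=2n+3-i$, omitting $2n$ creates \emph{two} exceptional rungs, $i=1$ (which would need $2n+2$) and $i=3$ (which would need the forbidden $2n$), and the two leftover labels are forced to be $n+1$ and $n+2$, placed on $u_1$ and $u_3$. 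Putting $n+1$ on $u_1$ fails immediately: its cycle neighbor $u_n$ carries the scheme label $n+3$, and $n+1$, $n+3$ are both even since $n$ is odd. Putting $n+2$ on $u_1$ and $n+1$ on $u_3$ also fails: $\gcd\{n+2,2n+1\}=\gcd\{n+2,3\}$, so the edge $u_1u_2$ forces $n\not\equiv 1\pmod 3$, while the rung $u_3v_3$ (and likewise the edge $u_3u_4$ with label $2n-1$) forces $3\nmid n+1$, i.e.\ $n\not\equiv 2\pmod 3$. Since $2n+3$ prime already forces $n\not\equiv 0\pmod 3$, one of these two violations occurs for \emph{every} admissible $n$; the mod-$3$ case split you anticipate cannot rescue the construction, because the obstruction lies in the frame itself.

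The repair is to treat the omitted even label as a free parameter instead of fixing it to be $2n$. For example, omit $n+3$: keep $\ell(v_i)=i$, and set $\ell(u_1)=n+1$, $\ell(u_i)=2n+3-i$ for $2\le i\le n-1$, and $\ell(u_n)=n+2$. Every rung is then either $(1,n+1)$, a sum-$(2n+3)$ pair, or $(n,n+2)$ (odd labels differing by $2$); the bottom cycle consists of $(n+1,2n+1)$ (coprime since $2(n+1)-(2n+1)=1$), consecutive pairs, $(n+4,n+2)$ (odd, differing by $2$), and $(n+2,n+1)$. This completes your plan with no case analysis at all --- arguably cleaner than the paper, which instead abandons your other commitment: it moves the labels $n+1,n+2$ (resp.\ $n,n+3$) onto the outer cycle, omits the even label $n-1$ (resp.\ $n+1$), and splits on $n\bmod 3$. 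Either way, choosing \emph{which} even label to omit is exactly where the content of the proof lies, and it is the one point your sketch leaves unresolved.
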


\begin{proof}
We assume $n\not\equiv 0\pmod{3}$ to avoid $2n+3$ being divisible by $3$.
If $n\equiv 1\pmod{3}$, then we use the labeling defined in Table~\ref{tab:2n+3}. 
If $n\equiv 2\pmod{3}$, then use the same labeling defined in Table~\ref{tab:2n+3b} except $n+1$ is labeled as $n-1$ instead. 

\end{proof}

\begin{theorem}\label{2n-3}
If $2n-3$ is prime, then $\pr(GP(n,1))=2n+1.$
\end{theorem}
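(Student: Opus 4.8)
The plan is to mimic the construction of Theorem~\ref{2n-1}, but to pair the matched vertices so that $\ell(v_i)+\ell(u_i)=2n-3$ rather than $2n-1$. First I would dispose of the degenerate case: if $n>3$ and $n\equiv 0\pmod 3$ then $3\mid 2n-3$, so $2n-3$ is not the prime in question; the only $n\equiv 0\pmod 3$ with $2n-3$ prime is $n=3$, which is already covered by Theorem~\ref{n} since $3$ is prime. Hence I may assume $n>3$ and $n\not\equiv 0\pmod 3$. The lower bound is immediate: $n$ is odd, so $\alpha(GP(n,1))=n-1<n=\lfloor |V|/2\rfloor$, and Lemma~\ref{ind} gives $\pr(GP(n,1))>2n$, i.e.\ $\pr(GP(n,1))\ge 2n+1$.

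For the upper bound I would set $\ell(v_i)=i$ and $\ell(u_i)=2n-3-i$ for $1\le i\le n-2$. Then the paths $v_1\cdots v_{n-2}$ and $u_1\cdots u_{n-2}$ carry consecutive integers (so those cycle edges are fine), and each matched pair $v_iu_i$ has label sum $2n-3$, which is prime, so $\gcd\{\ell(v_i),\ell(u_i)\}=1$ by Observation~\ref{easy}. This consumes exactly $1,\dots,2n-4$, leaving four large labels for the boundary vertices $v_{n-1},v_n,u_{n-1},u_n$, and leaving two matched edges, the two cycle-junction edges $v_{n-2}v_{n-1}$ and $u_{n-2}u_{n-1}$, and the two wrap-around edges $v_nv_1$ and $u_nu_1$ to check. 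Since $\ell(v_1)=1$ the edge $v_nv_1$ is automatic, so $v_n$ can safely carry the most awkward label; and setting $\ell(u_n)=2n-3$ makes $u_nu_1=\{2n-3,2n-4\}$ consecutive, disposing of the other wrap-around at once.

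The real work—and the reason a single table will not suffice—is distributing the remaining three large labels among $v_{n-1},v_n,u_{n-1}$ alongside the fixed neighbour $\ell(u_{n-2})=n-1$. The only delicate edges are $u_{n-2}u_{n-1}=\{n-1,\ell(u_{n-1})\}$, the matched edge $v_{n-1}u_{n-1}$, and $v_{n-1}v_n$, and the whole obstruction is modulo $3$: among the three candidate labels $n-1$, $2n-2$, $2n+1$, exactly which are divisible by $3$ depends on $n\bmod 3$, which is what forces a two-case split. When $n\equiv 2\pmod 3$ I would exclude $2n$ and place $\{2n-3,2n-2,2n-1,2n+1\}$ on the boundary via $\ell(v_{n-1})=2n-2,\ \ell(v_n)=2n-1,\ \ell(u_{n-1})=2n+1,\ \ell(u_n)=2n-3$; here $\gcd\{n-1,2n+1\}=\gcd\{n-1,3\}=1$ and $\gcd\{2n-2,2n+1\}=\gcd\{2n-2,3\}=1$ precisely because $n\equiv 2$. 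When $n\equiv 1\pmod 3$ both $2n-2$ and $2n+1$ (as well as $n-1$) are multiples of $3$ and so cannot sit on mutually adjacent vertices; the fix is to exclude $2n-2$ instead of $2n$ and use $\{2n-3,2n-1,2n,2n+1\}$ via $\ell(v_{n-1})=2n,\ \ell(v_n)=2n+1,\ \ell(u_{n-1})=2n-1,\ \ell(u_n)=2n-3$, which parks the unique remaining multiple of $3$, namely $2n+1$, on $v_n$, whose neighbours $2n$, $1$, and $2n-3$ are all coprime to it. I expect this $n\bmod 3$ bookkeeping on the three boundary multiples of $3$ to be the only genuine obstacle; no further swap on divisibility by $5$ or $7$ should be needed, and since $GP(n,1)$ is not prime for odd $n$, each labelling is a minimum coprime labelling, giving $\pr(GP(n,1))=2n+1$.
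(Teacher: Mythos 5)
Your proof is correct and takes essentially the same approach as the paper's: consecutive labels along both cycles, spoke pairs summing to the prime $2n-3$, the independence-number lower bound, and a case split on $n \bmod 3$ to place the leftover large labels—indeed, for $n\equiv 1\pmod 3$ your labeling matches the paper's Table~\ref{tab:2n-3b} up to swapping the labels of $u_{n-1}$ and $u_n$. The only differences are cosmetic (for $n\equiv 2\pmod 3$ you omit the label $2n$ where the paper omits $2n-4$), together with your slightly more careful treatment of the degenerate case $n=3$.
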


\begin{proof}
We may assume $n\not\equiv 0\pmod{3}$, otherwise $2n-3$ is a multiple of $3$.
If $n\equiv 2\pmod{3}$, then we use the labeling defined in Table~\ref{tab:2n-3}. 
If $n\equiv 1\pmod{3}$, then we use the labeling defined in Table~\ref{tab:2n-3b}.

\end{proof}

\begin{theorem}\label{2n-5}
If $2n-5$ is prime, then $\pr(GP(n,1))=2n+1.$
\end{theorem}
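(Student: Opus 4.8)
The plan is to mimic the constructions used for Theorems~\ref{2n-1} and~\ref{2n-3}, building a labeling whose spoke edges $u_iv_i$ mostly have labels summing to the prime $2n-5$. First I would record the divisibility obstruction: $2n-5\equiv 0\pmod 3$ precisely when $n\equiv 1\pmod 3$, so for $2n-5$ to be an odd prime exceeding $3$ we must have $n\not\equiv 1\pmod 3$; I therefore split into the two surviving residues $n\equiv 0\pmod 3$ and $n\equiv 2\pmod 3$ (both of which occur for odd $n$), exactly as the earlier $2n-c$ theorems do. Recall also that since $n$ is odd, $\alpha(GP(n,1))=n-1<n$, so by Lemma~\ref{ind} the graph is not prime and $\pr(GP(n,1))>2n$; it then suffices to exhibit a coprime labeling with largest label $2n+1$.

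Next I would lay down the bulk of the labeling. Assign the outer vertices $v_i=i$ and the inner vertices $u_i=2n-5-i$ over an initial segment (up to $i=n-5$, where $u_{n-5}=n$), so that each such spoke pair sums to the prime $2n-5$ and is coprime by Observation~\ref{easy}; simultaneously the outer-cycle edges $v_iv_{i+1}$ and inner-cycle edges $u_iu_{i+1}$ are pairs of consecutive integers and hence coprime. The full label set is $\{1,\ldots,2n+1\}\setminus\{2n\}$, so exactly one even label ($2n$) is omitted in favor of $2n+1$, matching the target maximum. This leaves a short tail of vertices $v_{n-4},\ldots,v_n$ and $u_{n-4},\ldots,u_n$ together with the ten unused labels $\{n-4,n-3,n-2,n-1\}\cup\{2n-5,2n-4,2n-3,2n-2,2n-1,2n+1\}$ to be placed.

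I would then give an explicit table for the tail, with one variant for $n\equiv 0\pmod 3$ and one for $n\equiv 2\pmod 3$ (obtained from the first by a small relabeling), in the style of Tables~\ref{tab:2n-1} and~\ref{s-conj}. Within the tail I would rely on consecutive integers, consecutive odd integers, and trivial coprimality (the label $1$ sits on $v_1$) for the cycle edges, and on the prime sum $2n-5$ or a local swap for the spoke edges. The only edges outside the bulk pattern that then require checking are the wrap-around edges $v_nv_1$ and $u_nu_1$ and the two junction edges $v_{n-5}v_{n-4}$ and $u_{n-5}u_{n-4}$ joining the bulk to the tail. Combining everything gives $\pr(GP(n,1))\le 2n+1$, and with the lower bound above this forces equality.

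The main obstacle is the tail. Because $2n-5$ is only slightly below $2n$, the sum-pairing leaves a genuinely long tail of ten labels, several of which are multiples of $3$, $5$, or $7$; the delicate part is ordering the leftover $u$-labels so that no two multiples of a common small prime land on adjacent cycle vertices and so that every tail spoke pair is coprime. Note that $2n-5$ itself, appearing as a $u$-label, is harmless since its spoke partner is at most $n<2n-5$ and its cycle neighbors differ from it by $1$; the real work is avoiding shared factors of $3$ (which is what forces the split on $n\bmod 3$), possibly with a handful of swaps and perhaps a mild lower bound such as $n>9$ to guarantee the tail fits. This is the only place where the verification is not completely routine.
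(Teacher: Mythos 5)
Your framework coincides with the paper's: the lower bound from Lemma~\ref{ind}, the observation that $2n-5$ prime forces $n\not\equiv 1\pmod 3$, and the core device of labeling $v_i=i$ and $u_i=2n-5-i$ so that most spoke pairs sum to the prime $2n-5$. But the proposal stops exactly where the work begins: you never exhibit the tail labeling, and in this theorem the explicit tables \emph{are} the proof. Everything you defer (``I would then give an explicit table,'' ``possibly with a handful of swaps,'' ``perhaps a mild lower bound such as $n>9$'') is precisely the nonroutine content, and you concede as much in your last paragraph. A plan together with an acknowledgment that the delicate part remains open is not a proof; note also that the theorem carries no hypothesis $n>9$, so introducing one would weaken the statement.

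Two concrete points show the gap is substantive rather than cosmetic. First, your residue analysis is incomplete: splitting on $n\bmod 3$ alone does not suffice. In the natural completion of your bulk (and in the paper's), $u_1=2n-6$, and the wrap-around edge $u_nu_1$ receives labels differing by $5$ (in the paper, $2n-1$ and $2n-6$); these share the factor $5$ exactly when $n\equiv 3\pmod 5$. Accordingly the paper uses three tables (Tables~\ref{tab:2n-5}, \ref{tab:2n-5b}, \ref{tab:2n-5c}): one for $n\equiv 1,2,4\pmod 5$, and two for $n\equiv 3\pmod 5$ further split by $n\bmod 3$, with $n\equiv 0\pmod 5$ discarded since then $5\mid 2n-5$. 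Your proposal asserts the split on $n\bmod 3$ is ``what forces'' the cases and never confronts the mod~$5$ obstruction. Second, your choice to stop the bulk at $i=n-5$ makes the unconstructed part strictly harder than necessary: the pairing $u_i=2n-5-i$ can be carried to $i=n-3$ (labels remain distinct since $u_{n-3}=n-2>n-3$), leaving only a six-vertex tail with labels $\{2n-5,2n-3,2n-2,2n-1,2n,2n+1\}$ and omitted even label $2n-4$, which is what the paper arranges. Your ten-vertex tail, mixing the small labels $n-4,\dots,n-1$ with the large ones, creates cycle edges whose endpoint labels differ by roughly $n$ (with no prime-sum or consecutive-integer structure to exploit), plus additional difference-$5$ and difference-$7$ conflicts; nothing in the proposal shows these can all be resolved simultaneously.
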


\begin{proof}
We can assume $n\not\equiv 1\pmod{3}$ to avoid $3$ dividing into $2n-5$.
If $n\equiv 0$ or $2\pmod{3}$ and $n\equiv 1,2,$ or $4\pmod{5}$, then we use the labeling defined in Table~\ref{tab:2n-5}. 
When $n\equiv 0$ or $2\pmod{3}$ and $n\equiv 0\pmod{5}$, then $2n-5\equiv 0\pmod{5}$, so we ignore these cases.
When $n\equiv 0\pmod{3}$ and $n\equiv 3\pmod{5}$, then we use the labeling defined in Table~\ref{tab:2n-5b}. 
When $n\equiv 2\pmod{3}$ and $n\equiv 3\pmod{5}$, then we use the labeling defined in Table~\ref{tab:2n-5c}. 

\end{proof}

\begin{theorem}\label{n+6}
If $n+6$ is prime, then $\pr(GP(n,1))=2n+1.$
\end{theorem}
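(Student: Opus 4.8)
The plan is to imitate the single-prime constructions of Theorems~\ref{n+2} and~\ref{2n-5}, using the prime $p=n+6$ as the difference across the spoke edges. First I would set $\ell(v_i)=i$ for $i=1,\dots,n$, so that every $v$-cycle edge joins consecutive integers or meets the label $1$ and is coprime by Observation~\ref{easy}. For the bulk indices $i=1,\dots,n-5$ I would set $\ell(u_i)=i+n+6$; these labels run consecutively through $\{n+7,\dots,2n+1\}$, so the $u$-cycle edges among them are coprime, and each spoke $u_iv_i$ has difference exactly $p=n+6$. Since $0<i\le n-5<p$ and $p$ is prime, $p\nmid i$, whence $\gcd\{i,i+p\}=1$ by Observation~\ref{easy}, settling all bulk spokes simultaneously. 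The full label set is then $\{1,\dots,2n+1\}$ with exactly one of $n+1,\dots,n+6$ omitted, so the largest label is $2n+1$; and since $n$ is odd, $\alpha(GP(n,1))=n-1<n=\lfloor |V|/2\rfloor$, so Lemma~\ref{ind} forbids a prime labeling and $\pr(GP(n,1))>2n$, matching the construction.

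The remaining work is the five overflow vertices $u_{n-4},\dots,u_n$, which must receive five of the six small labels $n+1,\dots,n+6$ (the sixth being omitted). Here I would present explicit tables, exactly as in Table~\ref{tab:2n-1}, choosing the omitted label and the order of the five remaining ones so that every seam edge is coprime: the five overflow spokes $u_iv_i$ (with $v$-endpoints $n-4,\dots,n$), the four internal edges $u_{n-4}u_{n-3},\dots,u_{n-1}u_n$, the edge $u_{n-5}u_{n-4}$ with $\ell(u_{n-5})=2n+1$, and the wrap edge $u_nu_1$ with $\ell(u_1)=n+7$. Because $p=n+6$ is prime and exceeds $5$, it is divisible by neither $3$ nor $5$; equivalently I may assume $n\not\equiv 0\pmod 3$ and $n\not\equiv 4\pmod 5$, which trims the casework. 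I would split along the residue of $n$ modulo $3$ (two classes) and, where a difference of $5$ or $10$ on an overflow spoke forces it, further along $n$ modulo $5$, exhibiting one table per surviving residue pair and letting the per-edge $\gcd$ checks follow from Observation~\ref{easy} as in the other appendix theorems.

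The main obstacle is precisely this seam bookkeeping: the overflow window now spans five consecutive $v$-labels rather than the one or three of the $n+2$ and $n+4$ cases, so the primes $2$, $3$, and $5$ can each obstruct at once. I would first eliminate even--even adjacencies: since $n$ is odd, the even overflow $v$-labels are $n-3$ and $n-1$, forcing $\ell(u_{n-3}),\ell(u_{n-1})\in\{n+2,n+4,n+6\}$, and the wrap edge forces $\ell(u_n)$ odd because $\ell(u_1)=n+7$ is even. I would then spend the single free omission together with the ordering freedom among the three odd and three even small labels to route around the residual factors of $3$ and $5$ on the overflow spokes and on the two boundary edges $u_{n-5}u_{n-4}$ and $u_nu_1$. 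Checking that an admissible arrangement exists for every residue pair $(n\bmod 3,\,n\bmod 5)$ is routine once the tables are written, but it is the genuine content of the argument; as noted for Theorems~\ref{n+4}--\ref{n+6}, the verification is then immediate from Observation~\ref{easy}.
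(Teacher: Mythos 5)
Your lower bound and your bulk construction (which coincides with the paper's: $\ell(v_i)=i$, $\ell(u_i)=n+6+i$ for $i\le n-5$) are fine, but the seam plan is fatally rigid: by insisting that \emph{every} $v_i$ keeps the label $i$, you force the five overflow vertices $u_{n-4},\dots,u_n$ to absorb five of the six labels $n+1,\dots,n+6$, and no choice of omitted label and ordering can achieve this for all $n$ with $n+6$ prime. Take $n=37$, so $n+6=43$ is prime. As you note, parity forces the odd labels $39,41,43$ onto $u_{34},u_{36},u_{37}$, leaving $u_{33},u_{35}$ to take two of $38,40,42$. But $v_{35}=35=5\cdot 7$ rules out $40$ and $42$ on the spoke at $u_{35}$ (differences $5$ and $7$), so $\ell(u_{35})=38$; then $u_{33}$ must take $40$ or $42$, yet $\gcd\{42,33\}=3$ kills the spoke and $\gcd\{40,75\}=5$ kills the edge to $u_{32}$, which carries $2n+1=75$. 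Every branch dies, so the tables you promise do not exist. This also exposes a second error: your claim that casework over $(n\bmod 3,\,n\bmod 5)$ suffices is wrong, because your seam necessarily creates spoke differences of $7$ (e.g.\ an even label $n+5$ against the $v$-label $n-2$), and primality of $n+6$ only excludes $n\equiv 1\pmod 7$, not $n\equiv 2,3,4\pmod 7$; indeed $37\equiv 2\pmod 7$ is precisely what defeats the example above.

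The missing idea---and what the paper's Tables~\ref{tab:n+6}--\ref{tab:n+6c} actually do---is to let labels cross the spokes at the seam. The paper keeps your bulk labeling but assigns $n+2,n+3$ to $v_{n-1},v_n$ and $n+1,\,n,\,n-1,\,n+4,\,n+6$ to $u_{n-4},\dots,u_n$ (omitting $n+5$), so that the seam spokes have differences only $5,3,1,2,3$ and no edge anywhere has a difference exceeding $5$; the mod-$7$ obstruction cannot arise at all, and the two variant tables handle the residue $n\equiv 1\pmod 5$, where a difference-$5$ pair on the $u$-cycle would fail. (One can check the paper's Table~\ref{tab:n+6} succeeds at $n=37$ exactly because $36,37$ sit on the $u$-side and $39,40$ on the $v$-side.) To salvage your write-up you must abandon the constraint $\ell(v_i)=i$ for $i\geq n-1$ and rebuild the seam tables with labels swapped across the spokes; as written, the plan cannot be completed.
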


\begin{proof}
Assume that $n\not\equiv 0\pmod{3}$, else $n+6$ is divisible by 3.
If $n\equiv 1$ or $2\pmod{3}$ and $n\equiv 0,2,$ or $3\pmod{5}$, then we use the labeling defined in Table~\ref{tab:n+6}. 
If $n\equiv 2\pmod{3}$ and $n\equiv 1\pmod{5}$, then we use the labeling defined in Table~\ref{tab:n+6b}. 
If $n\equiv 1 \pmod{3}$ and $n\equiv 1\pmod{5}$, then we use the labeling defined in Table~\ref{tab:n+6c}.
If $n\equiv 1$ or $2\pmod{3}$ and $n\equiv 4\pmod{5}$, then $n+6$ is divisible by 5, so these cases are removed.


\end{proof}

\begin{table}[htb]
    \centering
    \begin{tabular}{|c|c|c|c|c|c|c|}
    \hline
         1& 2 & $\cdots$ & $n-3$ & $n-2$ & $n-1$ & $n$   \\
    \hline
         $n+5$ & $n+6$ & $\cdots$  & $2n+1$ & $n+1$ & $n+2$ & $n+4$   \\
    \hline
    \end{tabular}
    \caption{Labeling for Theorem~\ref{n+4} when $n\equiv 0\pmod{3}$}
    \label{tab:n+4}

\vspace{.2in}

    \begin{tabular}{|c|c|c|c|c|c|c|}
    \hline
         1& 2 & $\cdots$ & $n-3$ & $n-2$ & $n+2$ & $n+1$   \\
    \hline
         $n+5$ & $n+6$ & $\cdots$  & $2n+1$ & $n$ & $n+3$ & $n+4$   \\
    \hline
    \end{tabular}
    \caption{Labeling for Theorem~\ref{n+4} when $n\equiv 1\pmod{3}$}
    \label{tab:n+4b}

\vspace{.2in}

    \begin{tabular}{|c|c|c|c|c|c|c|c|c|}
    \hline
         1& 2 & 3 & 4 & $\cdots$ & $n-3$ & $n-2$ & $n-1$ & $n$   \\
    \hline
         $2n-1$ & $2n+1$ & $n+1$ & $n+2$ & $\cdots$ & $2n-5$ & $2n-2$  & $2n-3$ & $2n-4$   \\
    \hline
    \end{tabular}
    \caption{Labeling for Theorem~\ref{n-2} when $n\equiv 0\pmod{3}$}
    \label{tab:n-2}

\vspace{.2in}

    \begin{tabular}{|c|c|c|c|c|c|c|c|c|c|c|}
    \hline
         1& 2 & 3 & 4 & $\cdots$ & $n-5$ & $n-4$ & $n-3$ & $n-2$ & $n-1$ & $n$   \\
    \hline
         $2n-1$ & $2n+1$ & $n+1$ & $n+2$ & $\cdots$ & $2n-7$ & $2n-4$ & $2n-5$ & $2n-6$  & $2n-3$ & $2n-2$   \\
    \hline
    \end{tabular}
    \caption{Labeling for Theorem~\ref{n-2} when $n\equiv 1\pmod{3}$}
    \label{tab:n-2b}
\end{table}

\clearpage

\begin{table}[htb]
\resizebox{\textwidth}{!}{%
    \centering
     \begin{tabular}{|c|c|c|c|c|c|c|c|c|c|c|c|c|c|c|}
    \hline
         1& 2 & 3 & 4 & 5 & 6 & $\cdots$ & $n-7$ & $n-6$ & $n-5$ & $n-4$ & $n-3$ & $n-2$ & $n-1$ & $n$   \\
    \hline
         $2n-3$ & $2n-1$ & $2n-2$ & $2n+1$ & $n+1$ & $n+2$ & $\cdots$ & $2n-11$ & $2n-8$ & $2n-9$ & $2n-10$ & $2n-7$ & $2n-6$  & $2n-5$ & $2n-4$   \\
    \hline
    \end{tabular}}
    \caption{Labeling for Theorem~\ref{n-4} when $n\equiv 0\pmod{3}$}
    \label{tab:n-4}
\vspace{.2in}
    \resizebox{\textwidth}{!}{%
    \begin{tabular}{|c|c|c|c|c|c|c|c|c|c|c|c|c|}
    \hline
         1& 2 & 3 & 4 & 5 & 6 & $\cdots$ & $n-5$ & $n-4$ & $n-3$ & $n-2$ & $n-1$ & $n$   \\
    \hline
         $2n-3$ & $2n-1$ & $2n$ & $2n+1$ & $n+1$ & $n+2$ & $\cdots$ & $2n-9$ & $2n-6$ & $2n-7$ & $2n-8$  & $2n-5$ & $2n-4$   \\
    \hline
    \end{tabular}}
    \caption{Labeling for Theorem~\ref{n-4} when $n\equiv 2\pmod{3}$}
    \label{tab:n-4b}
\vspace{.2in}
    \centering
    \begin{tabular}{|c|c|c|c|c|c|c|c|}
    \hline
         1& 2 & 3 & 4  & $\cdots$ & $n-2$ & $n+1$ & $n+2$  \\
    \hline
         $n$ & $2n+1$ & $2n$ & $2n-1$ & $\cdots$  & $n+5$ & $n+4$ & $n+3$ \\
    \hline
    \end{tabular}
    \caption{Labeling for Theorem~\ref{2n+3} when $n\equiv 1\pmod{3}$}
    \label{tab:2n+3}
\vspace{.2in}
    \centering
    \begin{tabular}{|c|c|c|c|c|c|c|c|}
    \hline
         1& 2 & 3 & 4  & $\cdots$ & $n-2$ & $n$ & $n+3$  \\
    \hline
         $n-1$ & $2n+1$ & $2n$ & $2n-1$ & $\cdots$  & $n+5$ & $n+4$ & $n+2$ \\
    \hline
    \end{tabular}
    \caption{Labeling for Theorem~\ref{2n+3} when $n\equiv 2\pmod{3}$}
    \label{tab:2n+3b}
\vspace{.2in}
    \centering
    \begin{tabular}{|c|c|c|c|c|c|c|c|}
    \hline
         1& 2 & 3 & 4  & $\cdots$ & $n-2$ & $2n-3$ & $2n$  \\
    \hline
         $2n-2$ & $2n-5$ & $2n-6$ & $2n-7$ & $\cdots$  & $n-1$ & $2n-1$ & $2n+1$ \\
    \hline
    \end{tabular}
    \caption{Labeling for Theorem~\ref{2n-3} when $n\equiv 2\pmod{3}$}
    \label{tab:2n-3}
\vspace{.2in}
    \centering
    \begin{tabular}{|c|c|c|c|c|c|c|c|}
    \hline
         1& 2 & 3 & 4  & $\cdots$ & $n-2$ & $2n$ & $2n+1$  \\
    \hline
         $2n-4$ & $2n-5$ & $2n-6$ & $2n-7$ & $\cdots$  & $n-1$ & $2n-3$ & $2n-1$ \\
    \hline
    \end{tabular}
    \caption{Labeling for Theorem~\ref{2n-3} when $n\equiv 1\pmod{3}$}
    \label{tab:2n-3b}

%
\vspace{.2in}
    \centering
    \begin{tabular}{|c|c|c|c|c|c|c|c|}
    \hline
         1& 2 & 3 & $\cdots$ & $n-3$ & $2n-5$ & $2n+1$ & $2n$  \\
    \hline
         $2n-6$ & $2n-7$ & $2n-8$ & $\cdots$ & $n-2$ & $2n-3$ & $2n-2$ & $2n-1$ \\
    \hline
    \end{tabular}
    \caption{Labeling for Theorem~\ref{2n-5} when $n\equiv 0,2\pmod{3}$ and $n\equiv 1,2,$ or $4\pmod{5}$ }
    \label{tab:2n-5}
\vspace{.2in}
    \centering
    \begin{tabular}{|c|c|c|c|c|c|c|c|}
    \hline
         1& 2 & 3 & $\cdots$ & $n-3$ & $2n-5$ & $2n-1$ & $2n$  \\
    \hline
         $2n-4$ & $2n-7$ & $2n-8$ & $\cdots$ & $n-2$ & $2n-3$ & $2n-2$ & $2n+1$ \\
    \hline
    \end{tabular}
    \caption{Labeling for Theorem~\ref{2n-5} when $n\equiv 0\pmod{3}$ and $n\equiv 3\pmod{5}$}
    \label{tab:2n-5b}
\end{table}

\begin{table}[htb]
    \centering
    \begin{tabular}{|c|c|c|c|c|c|c|c|}
    \hline
         1& 2 & 3 & $\cdots$ & $n-3$ & $2n-5$ & $2n-1$ & $2n$  \\
    \hline
         $2n-6$ & $2n-7$ & $2n-8$ & $\cdots$ & $n-2$ & $2n+1$ & $2n-2$ & $2n-3$ \\
    \hline
    \end{tabular}
    \caption{Labeling for Theorem~\ref{2n-5} when $n\equiv 2\pmod{3}$ and $n\equiv 3\pmod{5}$}
    \label{tab:2n-5c}
\vspace{.2in}
    \centering
    \begin{tabular}{|c|c|c|c|c|c|c|c|c|c|}
    \hline
         1& 2 & 3 & $\cdots$ & $n-5$ & $n-4$ & $n-3$ & $n-2$ & $n+2$ & $n+3$ \\
    \hline
         $n+7$ & $n+8$ & $n+9$ & $\cdots$ & $2n+1$ & $n+1$ & $n$ & $n-1$ & $n+4$ & $n+6$  \\
    \hline
    \end{tabular}
    \caption{Labeling for Theorem~\ref{n+6} when $n\equiv 1$ or $2\pmod{3}$ and $n\equiv 0,2,$ or $3\pmod{5}$}
    \label{tab:n+6}
\vspace{.2in}
    \centering
    \begin{tabular}{|c|c|c|c|c|c|c|c|c|c|}
    \hline
         1& 2 & 3 & $\cdots$ & $n-5$ & $n-4$ & $n-3$ & $n-2$ & $n+2$ & $n+5$ \\
    \hline
         $n+7$ & $n+8$ & $n+9$ & $\cdots$ & $2n+1$ & $n+1$ & $n$ & $n+3$ & $n+4$ & $n+6$  \\
    \hline
    \end{tabular}
    \caption{Labeling for Theorem~\ref{n+6} when $n\equiv 2\pmod{3}$ and $n\equiv 1\pmod{5}$}
    \label{tab:n+6b}
\vspace{.2in}
    \centering
    \begin{tabular}{|c|c|c|c|c|c|c|c|c|c|}
    \hline
         1& 2 & 3 & $\cdots$ & $n-5$ & $n-4$ & $n-3$ & $n-2$ & $n+4$ & $n+5$ \\
    \hline
         $n+7$ & $n+8$ & $n+9$ & $\cdots$ & $2n+1$ & $n+1$ & $n$ & $n+3$ & $n+2$ & $n+6$  \\
    \hline
    \end{tabular}
    \caption{Labeling for Theorem~\ref{n+6} when $n\equiv 1\pmod{3}$ and $n\equiv 1\pmod{5}$}
    \label{tab:n+6c}
\end{table}

\end{document}